\newtheorem{theorem}{Theorem}[section]
\newtheorem{lemma}[theorem]{Lemma}
\newtheorem{proposition}[theorem]{Proposition}
\theoremstyle{definition}
\newtheorem{definition}[theorem]{Definition}
\theoremstyle{remark}
\newtheorem{remark}[theorem]{Remark}
\numberwithin{equation}{section}
\newcommand{\Map}{\mathrm{Map}}
\newcommand{\Id}{\mathrm{Id}}
\newcommand{\id}{\mathrm{id}}
\newcommand{\Comm}{\mathrm{Comm}}
\newcommand{\Ind}{\mathrm{Ind}}
\newcommand{\Mod}{\mathrm{-Mod}}
\newcommand{\fib}{\mathrm{hofib}}
\newcommand{\Topp}{\mathrm{Top}_\ast}
\newcommand{\cO}{\mathcal{O}}
\newcommand{\cS}{\mathcal{S}}
\newcommand{\cR}{\mathcal{R}}
\newcommand{\cL}{\mathcal{L}}
\newcommand{\cA}{\mathcal{A}}
\newcommand{\Qs}{\bar{\cR}}
\newcommand{\Q}{\bar{Q}}
\newcommand{\cSp}{\cS p}
\newcommand{\idop}[1][\ast]{\partial_{#1}(\Id)}
\newcommand{\sm}{\wedge}
\newcommand{\D}{\mathbb{D}}
\newcommand{\F}{\mathbb{F}}
\newcommand{\HF}[1][0]{\ifthenelse{\equal{#1}{0}}{}{\Sigma^{#1}}\mathrm{H}\mathbb{F}_2}
\newcommand{\sym}[2][2]{{#2}^{{\sm}#1}_{h\Sigma_{#1}}}
\newcommand{\symt}[2][2]{{#2}^{{\tp}\,#1}_{h\Sigma_{#1}}}
\newcommand{\opsusp}{\mathbf{s}}
\newcommand{\Lie}{\mathsf{Lie}}
\newcommand{\sLie}{\opsusp\Lie}
\newcommand{\frQsLie}{\opsusp\cL_{\Qs}}
\newcommand{\aQs}{\cA_{\Qs}}
\newcommand{\ti}{\mathrm{ti}}
\newcommand{\tp}{\otimes}
\newcommand{\fb}[1]{\llbracket #1 \rrbracket}
\title{The mod $2$ homology of free spectral Lie algebras}
\author{Omar Antol\'{\i}n Camarena}
\address{Institute of Mathematics, UNAM, Mexico City}
\email{omar@matem.unam.mx}
\subjclass[2010]{55P43, 55S99, 55S12}
\keywords{Goodwillie calculus, homology operations}
\begin{document}

\begin{abstract}
  The Goodwillie derivatives of the identity functor on pointed spaces
  form an operad $\idop$ in spectra. Adapting a definition of Behrens,
  we introduce mod $2$ homology operations for algebras over this
  operad and prove these operations account for all the mod $2$
  homology of free algebras on suspension spectra of simply-connected
  spaces.
\end{abstract}

\maketitle
\tableofcontents

\section{Introduction}

Goodwillie calculus \cite{GoodwillieIII} associates to appropriate
functors $F : \Topp \to \Topp$ a tower of approximations
$$ \cdots \to P_n F \to P_{n-1} F \to \cdots \to P_1 F \to P_0 F $$
that is analogous to the sequence of Taylor polynomials for functions
of a real variable. The homotopy fibers
$D_n F = \fib (P_n F \to P_{n-1} F)$ are called the layers of the
Goodwillie tower and are analogous to individual monomials
${f^{(n)}(0)x^n}/{n!}$ in the Taylor expansion of a function.
Goodwillie proved that these layers are of the form
$D_n F (X) = \Omega^\infty(\partial_n F \sm \Sigma^\infty X^{\sm
  n})_{h\Sigma_n}$ for some sequence of spectra $\partial_n F$ where
the $n$-th spectrum is equipped with an action of $\Sigma_n$.

These derivatives $\partial_n F$ are very interesting even for
$F = \Id$ and have been much studied in that case; they can be
described as the Spanier--Whitehead duals of certain finite complexes.
The first such description was obtained by Johnson \cite{Johnson}; a
second description is in terms of partitions and appears in
\cite{AroneMahowald}. The \emph{partition complex} $P_n$ is the
pointed simplicial set
\[N\Pi_n \!\Bigm/\! \bigl(N(\Pi_n \setminus \{\hat{0}\}) \cup
  N(\Pi_n \setminus \{\hat{1}\})\bigr), \]
where $\Pi_n$ is the poset of partitions of a set with $n$ elements,
ordered by refinement; $\hat{0}$ and $\hat{1}$ denote its least and
greatest element, respectively; and $N$ denotes, as usual, the nerve
functor. We shall regard $P_n$ as having the action of $\Sigma_n$
induced by permutations of the $n$-element set.

The layers of the Goodwillie tower of the identity are given by
\[D_n(\Id)(X) = \Omega^\infty \left( \Map_\ast(P_n, \Sigma^\infty
  X^{\sm n})_{h\Sigma_n} \right),\]
where $\Map_\ast$ denotes the spectrum of maps from a pointed space to
a spectrum. This implies that $\idop[n]$ is $\Map_\ast(P_n, S)$, the
Spanier--Whitehead dual of $P_n$.

In \cite{ChingBar}, Ching constructs an operad structure on $\idop$
that is easiest to describe in dual form: as a cooperad structure on
$P_\ast$. That cooperad is the bar construction on the nonunital
commutative operad in spectra (given by $\Comm_n = S$ for all
$n \ge 1$), so that the operadic suspension of $\idop$ is Koszul dual
to the commutative operad and we can think of $\idop$ as a shifted
version of the Lie operad. Alternatively, one can also see a relation
between $\idop$ and the Lie operad using what is known about the
homology of the partition complex, namely that the space of $n$-ary
operations of the Lie operad in Abelian groups is isomorphic as a
$\mathbb{Z}[\Sigma_n]$-module to
$\mathrm{Hom}(H_{n-2}(P_n), \mathrm{sgn})$ ---where $\mathrm{sgn}$ is
the sign representation of $\Sigma_n$.

The mod $p$ homology of the layers,
\[\D_n(X) := \D_n(\Id)(X) = \left( \idop[n] \sm \Sigma^\infty X^{\sm n}
  \right)_{h\Sigma_n},\]
was studied in \cite{AroneMahowald} in the case that $X$ is a sphere.
(Since the free $\idop$-algebra on a space $X$ is given by
$\bigoplus_{n \ge 0} \D_n(X)$, the results in that paper can be
interpreted as being about the mod $p$ homology of the free
$\idop$-algebra on $S^n$.) In the case $p=2$, for example, what Arone
and Mahowald showed is that $H_\ast(\D_n(S^m); \F_p)$ is only non-zero
when $n=2^k$ is a power of $2$ and in that case it is
$\Sigma^{-k} CU_\ast$ as a module over the Steenrod algebra, where
$CU_\ast$ is the free graded $\F_p$-vector space with basis given by
the ``completely unadmissible'' words of length $k$:
$$ \{Q^{s_1} \cdots Q^{s_k} u : s_k \ge m, s_i > 2s_{i+1}\} $$ where
$u$ is a generator of $H_m(S^m; \F_p)$ and the action of the Steenrod
algebra on $CU_\ast$ is given by the Nishida relations.

Behrens \cite{Behrens} uses this computation to introduce mod $2$
homology operations
$\bar{Q}^j : H_d(D_iF(X)) \to H_{d+j-1}(D_{2i}F(X))$ for $j \ge d$ on
the layers of the Goodwillie tower of a functor $F$. We adapt his
definition to produce homology operations on $\idop$-algebras (see
definition \ref{d:unops}). Behrens shows the Arone--Mahowald
computation can be interpreted as saying that the homology of
$\bigoplus_{n \ge 0} \D_n(X)$ has an $\F_2$-basis consisting of
completely unadmissible sequences of $\bar{Q}^j$'s with excess at
least $k$ applied to the fundamental class of $S^k$; furthermore, he
computes the relations satisfied by the $\bar{Q}^j$'s.

In the present work we compute the mod $2$ homology of the free
$\idop$-algebra on a spectrum, showing that it is roughly speaking the
free module over the algebra of operations $\Q^j$'s on the free Lie
algebra on $H_\ast(X)$ (see theorem \ref{t:homfree}).

\subsection{Some related work}

Since the first appearance of this work as the author's PhD thesis, an
analogous computation of the mod $p$ homology for odd primes $p$ has
been carried out by Kjaer \cite{Kjaer}. In that setting the story is
less complete: due to a lack of an analogue of Priddy's trace formula
\cite[Lemma 1.4.3]{Behrens}, the relations between the homology
operations have yet to be fully determined.

In the upcoming paper \cite{Lukas}, Brantner computes the
$E(n)$-homology of certain free $\idop$-algebras in the category of
$K(n)$-local spectra and thus computes the $E(n)$-homology operations
on $K(n)$-local $\idop$-algebras. Here $E(n)$ and $K(n)$ denote Morava
$E$-theory and Morava $K$-theory respectively.

Also, in the final section of the paper we will explain the relation
between our computation and the $E_1$-page of Kuhn's spectral sequence
for Topological Andr\'e--Quillen homology of an augmented
$E_\infty$-ring spectrum \cite[Theorem 8.1]{Kuhn}.

\section*{Acknowledgments}

I would like to thank Jacob Lurie for suggesting this general topic
and for answering many questions, Lukas Brantner for many useful
conversations and particularly for correcting an error in an earlier
version (see remark \ref{r:lukas}), and Nick Kuhn for telling me about
his paper \cite{Kuhn} (see section \ref{s:divpow}). I would also like
to thank two anonymous referees who both made several useful remarks
which improved the clarity of this paper.

\section{Homology operations on algebras for operads}

We will work in a symmetric monoidal category of spectra, such as EKMM
$S$-modules \cite{EKMM}, taking ``spectrum'' to mean $S$-module and
``$E_\infty$-ring spectrum'' to mean commutative $S$-algebra. This is
the same framework used in \cite{ChingBar} to put an operad structure
on the derivatives of the identity.

Given an operad $\cO$ in spectra we will denote by $F_{\cO}$ the free
$\cO$-algebra functor. This functor is a monad, and $\cO$-algebras are
equivalently algebras for it. If $E$ is an $E_\infty$-ring spectrum,
then there is an operad in $E$-module spectra we will denote by
$E \sm \cO$, and a free $(E \sm \cO)$-algebra functor $F_{E \sm \cO}$
defined on $E$-module spectra. The $E$-module of $n$-ary operations in
$E \sm \cO$ is the free $E$-module on the spectrum $\cO_n$:
$(E \sm \cO)_n = E \sm \cO_n$; we get an operad structure on
$E \sm \cO$ induced from the operad structure on $\cO$ because the free
$E$-module functor is symmetric monoidal. The free algebra functors
are related in the expected way:
$E \sm F_{\cO}(X) \simeq F_{E \sm \cO}(E \sm X)$. We will also make use
of the functor between $\cO$-algebras and $(E \sm \cO)$-algebras
induced by the free $E$-module functor, $E \sm -$.

We will only consider cofibrant operads for which the notion of
algebra is homotopy invariant, meaning that we can think of the
homotopy type of the free $\cO$-algebra as being given by
\[F_\cO(X) = \bigvee_{n \ge 0} (\cO_n \sm X^n)_{h\Sigma_n},\]
and that we will think of an $\cO$-algebra structure on $A$ as
providing maps $(\cO_n \sm A^{\sm n})_{h\Sigma_n} \to A$.

Every class
$\alpha \in E_m\left(F_\cO\left(\bigvee_{i=1}^k S^{d_i}\right)\right)$
in the $E$-homology of the free $\cO$-algebra on a wedge of $k$
spheres gives a $k$-ary homology operation on the $E$-homology of any
$\cO$-algebra $A$, defined as follows.

Given $x_i \in E_{d_i}(A)$ ($i=1, \ldots, k$), we can represent each
$x_i$ by a map of spectra $S^{d_i} \to E \sm A$, and thus the whole
collection of them can be described by a single map of spectra
$\bar{x} : \bigvee_{i=1}^k S^{d_i} \to E \sm A$. Since $E \sm A$ is
an $(E \sm \cO)$-algebra, $\bar{x}$ has an adjoint $\tilde{x}$ which
is a map of $(E \sm \cO)$-algebras to $E \sm A$ from the free
$(E \sm \cO)$-algebra on $\bigvee_{i=1}^k S^{d_i}$, namely
$F_{E \sm \cO}(\bigvee_{i=1}^k \Sigma^{d_i} E) = E \sm
F_\cO(\bigvee_{i=1}^k S^{d_i})$.

The homology operation corresponding to $\alpha$, is
$\alpha_\ast : \bigotimes_{i=1}^k E_{d_i}(A) \to E_m(A)$ defined by
setting $\alpha_\ast(x_1 \otimes \cdots \otimes x_k)$ to be
represented by the map
\[S^m \xrightarrow{\alpha} E \sm F_\cO(\bigvee_{i=1}^k S^{d_i})
\xrightarrow{\tilde{x}} E \sm A.\]

An analogous construction gives operations on the stable homotopy of
$(E \sm \cO)$-algebras. Given an $\cO$-algebra $A$, the operations on
the $E$-homology of $A$ coincide with those produced on the homotopy
of the $(E \sm \cO)$-algebra $E \sm A$.

To get a useful theory of homology operations for $\cO$-algebras,
besides computing those homology groups, the various
$E_m(F_\cO(\bigvee_{i=1}^k S^{d_i}))$, one must organize the
operations: find a relatively small collection of operations that
generate all others and find a generating set of relations for the
operations. This has been carried out for $\mathrm{H}\F_p$-homology of
algebras for the $E_n$-operads, due to May in the case $n=\infty$, and
due to F. Cohen in the case $1 \le n < \infty$; see
\cite{CohenLadaMay}.

Homology operations with field coefficients are simpler to study,
because of the following result:

\begin{proposition}\label{p:field}
  Let $\cO$ be an operad in spectra. The homology with coefficients in
  a field $k$ of the free $\cO$-algebra on a spectrum $X$ is a functor
  of the homology of $X$.
\end{proposition}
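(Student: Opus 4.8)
The plan is to reduce everything to a statement about modules over the Eilenberg--MacLane spectrum $\mathrm{H}k$, where $k$ being a field makes the module category as simple as possible. Writing $H_\ast(Y;k) = \pi_\ast(\mathrm{H}k \sm Y)$, the first step is to identify $\mathrm{H}k \sm F_\cO(X)$. Taking $E = \mathrm{H}k$ in the formalism set up above gives the natural equivalence $\mathrm{H}k \sm F_\cO(X) \simeq F_{\mathrm{H}k \sm \cO}(\mathrm{H}k \sm X)$, so that $H_\ast(F_\cO(X);k) \cong \pi_\ast F_{\mathrm{H}k \sm \cO}(\mathrm{H}k \sm X)$ is the homotopy of the free $(\mathrm{H}k \sm \cO)$-algebra on the $\mathrm{H}k$-module $\mathrm{H}k \sm X$.

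The key structural input is that, because $k$ is a field, $\pi_\ast \mathrm{H}k = k$ is a graded field, so every graded module over it is free; consequently every $\mathrm{H}k$-module splits as a wedge of suspensions of $\mathrm{H}k$. A short computation with $[\Sigma^a \mathrm{H}k, \Sigma^b \mathrm{H}k]_{\mathrm{H}k} = \pi_{a-b}\mathrm{H}k$ then shows that $\pi_\ast$ induces an equivalence from the homotopy category of $\mathrm{H}k$-modules to the category of graded $k$-vector spaces. I would record a quasi-inverse, a realization functor $R \colon \mathrm{grVect}_k \to \mathrm{H}k\Mod$ with $\pi_\ast R(V) \cong V$ and $R(H_\ast(X;k)) \simeq \mathrm{H}k \sm X$ naturally.

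With this in hand, define $G := \pi_\ast \circ F_{\mathrm{H}k \sm \cO} \circ R$, a functor from graded $k$-vector spaces to graded $k$-vector spaces; here I use that $F_{\mathrm{H}k \sm \cO}$ is a homotopy functor on $\mathrm{H}k$-modules, which is guaranteed by the standing cofibrancy hypotheses on $\cO$. Combining the two previous displays yields a natural isomorphism $H_\ast(F_\cO(X);k) \cong G(H_\ast(X;k))$, which is exactly the assertion that the left-hand side is a functor of $H_\ast(X;k)$; in particular any map inducing an isomorphism on $k$-homology induces one on the $k$-homology of free algebras.

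The main obstacle is the structural input of the second paragraph: verifying that $\pi_\ast$ really is an equivalence of homotopy categories (equivalently, that $\mathrm{H}k$ behaves like a field spectrum) and producing the realization functor $R$ with enough naturality that the final isomorphism is natural. Everything else---the base-change equivalence and the homotopy-functoriality of $F_{\mathrm{H}k \sm \cO}$---is either already recorded in the setup above or a formal consequence of working with cofibrant operads. A parallel, more computational route avoids the module category by feeding the K\"unneth isomorphism $H_\ast(\cO_n \sm X^{\sm n};k) \cong H_\ast(\cO_n;k)\tp H_\ast(X;k)^{\tp n}$ into the homotopy-orbit spectral sequence $E^2_{s,t} = H_s(\Sigma_n; H_t(\cO_n \sm X^{\sm n};k)) \Rightarrow H_{s+t}\big((\cO_n \sm X^{\sm n})_{h\Sigma_n};k\big)$ and comparing spectral sequences along a $k$-homology equivalence; this works but requires attention to convergence and only yields the homology-equivalence-inverting form of the statement rather than an explicit functor.
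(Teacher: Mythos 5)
Your proposal is correct and follows essentially the same route as the paper: base-change along $\mathrm{H}k \sm -$ to identify $H_\ast(F_\cO(X);k)$ with the homotopy of the free $(\mathrm{H}k\sm\cO)$-algebra on $\mathrm{H}k\sm X$, then use that for a field $k$ the homotopy category of $\mathrm{H}k$-modules is equivalent to graded $k$-vector spaces (the paper's $D(k)\cong\mathrm{GrVect}_k$) to descend the free-algebra monad to a functor $\tilde{F}$ on graded vector spaces. Your explicit realization functor $R$ is just a chosen quasi-inverse to that equivalence, and your remark that the descended functor need only be a functor (not a free algebra over an operad, since homotopy orbits are not sent to orbits) matches the caveat the paper makes about its monad $\hat F$.
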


\begin{proof}
  Consider the following commutative diagram:

  \[ \xymatrix{ \cSp \ar[d]_{F_\cO} \ar[rr]^-{Hk\sm-} && Hk\Mod
  \ar[d]_{F_{Hk\sm\cO}} \ar[r]^-\pi & D(k) \ar[r]^-\cong
  \ar[d]^{\hat{F}}
  & \mathrm{GrVect}_k \ar[d]^{\tilde{F}} \\
  \cSp \ar[rr]_-{Hk\sm-} && Hk\Mod \ar[r]_-\pi & D(k) \ar[r]_-\cong &
  \mathrm{GrVect}_k} \]

Here $\cSp$ denotes the category of spectra, $Hk\Mod$ denotes the
category of $Hk$-module spectra, $D(k)$ is the homotopy category of
$Hk\Mod$ or, equivalently, the unbounded derived category of vector
spaces over $k$ and $\mathrm{GrVect}_k$ is the category of graded
vector spaces over $k$.

The functor $\pi$ is the projection from $Hk\Mod$ to its homotopy
category; this functor preserves coproducts but when the
characteristic of $k$ is not $0$, it does not send homotopy quotients
by the action of $\Sigma_n$ to quotients by the action of $\Sigma_n$,
so the induced monad $\hat{F}$ is no longer the free algebra functor
for an operad. Finally, when $k$ is a field there is an equivalence
$D(k) \cong \mathrm{GrVect}_k$, allowing us to define the monad
$\tilde{F}$ so that the last square commutes.
\end{proof}

\section{The spectral Lie operad and its desuspension}

Recall that the suspension of an operad $\cO$ in spectra is an operad
$\opsusp \cO$ defined so that:

\begin{itemize}
\item $\opsusp \cO$-algebra structures on $\Sigma A$ correspond to
  $\cO$-algebra structures on $A$,
\item the free algebra functors satisfy
  $F_{\opsusp \cO}(\Sigma X) = \Sigma F_{\cO}(X)$, and
\item as a symmetric sequence, $(\opsusp \cO)_n$ is given by
  $(S^{-1})^{\sm n} \sm \Sigma \cO_n$ with $\Sigma_n$ acting
  diagonally, permuting the smash factors on the left and acting on
  $\Sigma \cO_n$ via the suspension of the action on $\cO_n$ (that is,
  it acts trivially on the suspension coordinate of $\Sigma \cO_n$).
\end{itemize}

Following existing nomenclature, we will call the operad $\idop$
formed by the Goodwillie derivatives of the identity, the
\emph{spectral Lie operad}. Even though it is actually the
desuspension $\opsusp^{-1} \idop$ that is most closely analogous to
the classical Lie operad and some of our formulas would be simpler for
it, we will stick to the language of the $\idop$-operad and
$\idop$-algebras to make using the available literature easier. As a
symmetric sequence, $\opsusp^{-1} \idop$ is given by the derivatives
of the functor $\Omega \Sigma : \Topp \to \Topp$ (see \cite[Section
8]{GoodwillieIII}).

As we said before, the easiest way to describe the operad structure of
$\idop$ is to describe a cooperad structure on the bar construction of
the nonunital commutative operad, and obtain the operad structure of
$\idop$ by taking Spanier--Whitehead duals. For a description of
Ching's cooperad structure, we refer the reader to \cite[Section
4]{ChingBar}.

\section{Two kinds of Lie algebras in characteristic 2}
\label{s:liealg}

In this section we collect a few definitions about (graded) Lie
algebras we will need later. We will actually need to use two
different notions of Lie algebras. The usual definition of Lie algebra
over a field of characteristic $0$ is equivalent to being an algebra
for an operad $\Lie$ in Abelian groups. One can take algebras for that
operad in any category that is tensored over Abelian groups, such as
the category of $R$-modules or of graded $R$-modules for a commutative
ring $R$, and this gives one possible definition of graded Lie
algebra. Since $\idop$ is the suspension of the spectral version of
the Lie operad, we are also interested in algebras for the suspension
$\sLie$.

Spelling out the structure we see that a graded $\Lie$-algebra $L$
over a commutative ring $R$ is a graded module equipped with a binary
operation $[-,-] : L_i \otimes L_j \to L_{i+j}$ satisfying, for
homogeneous elements $x$, $y$ and $z$ of degrees $|x|$, $|y|$ and $|z|$:
\begin{itemize}
\item anti-symmetry, $[x,y] = -(-1)^{|x||y|}[y,x]$, and
\item the Jacobi identity,
  \[(-1)^{|z||x|}[x,[y,z]] + (-1)^{|y||x|}[y,[z,x]] +
  (-1)^{|z||y|}[z,[x,y]]=0.\]
\end{itemize}

For $\sLie$-algebras things are only slightly different:
\begin{itemize}
\item The bracket has degree $-1$: $[-,-] : L_i \otimes L_j \to L_{i+j-1}$.
\item Anti-symmetry becomes graded commutativity:
  \[[x,y] = (-1)^{|x||y|}[y,x].\]
\item The Jacobi identity stays the same!
\end{itemize}

All the signs in the above formulas come from the Koszul sign rule,
that is, from the signs in the symmetry isomorphism of the category of
graded $R$-modules. Since we will work over $R = \F_2$ we need not
worry about signs, but we mention them to point out that for an
element $x$ of even degree in a $\Lie$-algebra (or of odd degree in a
$\sLie$-algebra), the definitions imply that $2[x,x]=0$, but they do
not actually imply $[x,x]=0$ if $2$ is not invertible in $R$.

If $R$ has characteristic $2$, while $[x,x]$ may not be $0$, we do
have that any brackets involving it are $0$: by the Jacobi identity,
\[ [[x,x],y] = [[x,y],x] + [[y,x],x] = 2[[x,y],x] = 0.\]

As an example showing $[x,x]$ can be nonzero, the free $\Lie$-algebra
over $\F_2$ on one generator $x$ in an even degree is easily seen to
have basis $\{x, [x,x]\}$.

Given a graded associative $R$-algebra $A$, the graded commutator
$[x,y] = xy - (-1)^{|x||y|}yx$ gives $A$ the structure of a
$\Lie$-algebra, but all the algebras produced this way necessarily
have $[x,x] = 0$ for $|x|$ even. This means that if a $\Lie$-algebra
over an $R$ of characteristic $2$ has some nonzero $[x,x]$ with $|x|$
even, it cannot be faithfully represented by commutators, and thus
does not inject into its universal enveloping algebra. This
substantially changes portions of the theory of Lie algebras that
require an embedding into the universal enveloping algebra and so at
least one other definition of Lie algebra in characteristic $2$ is
sometimes used, one that forces an injection into a Lie algebra of
commutators.

% FIXME add full discussion of 2[x,x]=0, 3[x,[x,x]]=0

In the case of $R = \F_2$ this other kind of Lie algebra simply adds
the requirement that $[x,x]=0$ for all homogeneous $x$. We will call
this kind of Lie algebra a $\Lie^\ti$-algebra ---the $\ti$ stands for
\emph{totally isotropic}. A definition for all rings $R$, due to
Moore, just forces the representation as a commutator Lie algebra to
exist:

\begin{definition}
  A graded $\Lie^\ti$-algebra (resp. $\sLie^\ti$-algebra) over $R$ is
  graded $R$-module $L$ with a bracket $L_i \otimes L_j \to L_{i+j}$
  (resp. $L_{i+j-1}$) and a monomorphism $L \to A$ to some graded
  associative algebra so that the bracket goes to the graded
  commutator $xy-(-1)^{|x||y|}yx$ (resp. $xy+(-1)^{|x||y|}yx$).
\end{definition}

Our main interest in these algebras is that the \emph{basic products}
appearing in Hilton's theorem about the loop space of a wedge of
spheres \cite{Hilton} form a basis (called a Hall basis) for a totally
isotropic Lie algebra, see the discussion in section \ref{s:homfree}.

\section{Homology operations for spectral Lie algebras}

Throughout this section $L$ will denote an algebra for the operad
$\idop$. So in particular, $L$ is a spectrum equipped with structure
maps $\xi_n : \D_n(L) \to L$ where
$\D_n(L) = \left( \idop[n] \sm L^{\sm n} \right)_{h\Sigma_n}$. 
There is a more traditional way to describe the structure of an
algebra for an operad: by giving maps
$\alpha_n : \idop[n] \sm L^{\sm n} \to L$ that are
$\Sigma_n$-equivariant for the trivial action on the codomain and the
diagonal action on the domain. The relation between these two styles
of definition is captured in the following commutative diagram:
\[\xymatrix{ \idop[n] \sm L^{\sm n} \ar[rr]^-{\alpha_n} \ar[d] && L
  \ar[d] \ar[dr]^-{=}
  & \\
  \left( \idop[n] \sm L^{\sm n} \right)_{h\Sigma_n}
  \ar@/_2pc/[rrr]_{\xi_n} \ar[rr]^-{(\alpha_n)_{h\Sigma_n}} && L \sm
  \Sigma^\infty_+ B \Sigma_n \ar[r] & L,}\]
where the vertical maps are the canonical maps
$Y^{\sm n} \to \sym[n]{Y}$ and the unlabeled horizontal map is
$L \sm \Sigma^\infty_+(-)$ applied to $B\Sigma_n \to \ast$.

\bigskip

In this section we will describe some operations on $H_\ast(L; \F_2)$
that will turn out to generate all others, and whose definition will
only require the map
\[\xi = \xi_2 : \left( \idop[2] \sm L^{\sm 2} \right)_{h\Sigma_2} \to L.\]
Recall that $\idop[2]$ is the Spanier--Whitehead dual of the partition
complex $P_2$. A set with two elements has only two partitions, both
fixed by $\Sigma_2$, so that $P_2 \simeq S^1$ and
$\idop[2] \simeq S^{-1}$, both with trivial $\Sigma_2$-action. This
implies that
$\left( \idop[2] \sm L^{\sm 2} \right)_{h\Sigma_2} \simeq \Sigma^{-1}
\sym{L}$.

\subsection{The shifted Lie bracket}

We will start by describing the Lie bracket. Here we remind the reader
that $\idop$ is not really analogous to the Lie operad, but rather is
analogous to its operadic suspension.

\begin{definition}
  The \emph{shifted Lie bracket} on the homology of an $\idop$-algebra
  $L$ is the map
  $[\cdot, \cdot] : H_i(L) \otimes H_j(L) \to H_{i+j-1}(L)$ given by
  the identification $S^{-1} \to \idop[2]$, that is, it is the map
  induced on homology by the suspension of the structure map
  $\alpha_2 : \Sigma^{-1} L^{\sm 2} \to L$.
\end{definition}

This operation really gives a $\sLie$-algebra:

\begin{proposition}\label{p:jacobi}
  Given any $\idop$-algebra $L$, the shifted Lie bracket on
  $H_\ast(L)$ gives $H_\ast(L)$ the structure of a
  $\sLie$-algebra.
\end{proposition}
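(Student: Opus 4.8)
The plan is to establish the two axioms of a $\sLie$-algebra—graded commutativity and the (shifted) Jacobi identity—by exhibiting each as a relation among homology operations arising from classes in the homology of free $\idop$-algebras, exactly as the general operation-construction of Section~2 prescribes. The key observation is that both axioms are statements purely about the operad $\idop$ itself: they will follow from identities holding in $\idop[2]$ and $\idop[3]$ together with the associativity of the operadic structure maps $\xi_n$. Since we work over $\F_2$ all Koszul signs vanish, so graded commutativity $[x,y]=[y,x]$ and the symmetric form of Jacobi are what we must verify.

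First I would treat commutativity. The bracket is induced by the fundamental class $S^{-1}\to\idop[2]$ together with $\alpha_2:\Sigma^{-1}L^{\sm 2}\to L$. Because $\idop[2]=S^{-1}$ carries the \emph{trivial} $\Sigma_2$-action (as noted in the excerpt, $\trees{\set 2}\cong S^1$ with trivial action), the two ways of feeding $x$ and $y$ into $\alpha_2$ differ only by the swap map on $L^{\sm 2}$, which on $\F_2$-homology introduces the sign $(-1)^{|x||y|}=1$. Concretely, I would factor the bracket through the $\Sigma_2$-homotopy-orbit description $\left(\idop[2]\sm L^{\sm 2}\right)_{h\Sigma_2}\cong\Sigma^{-1}\sym{L}$ and use the commutative diagram relating $\alpha_2$ to $\xi_2$ from the start of this section: since $\xi_2$ factors through the orbits, the input $x\otimes y$ and its transpose $y\otimes x$ map to the same class, giving $[x,y]=[y,x]$.

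The main work, and the main obstacle, is the Jacobi identity, which is a cubic relation and hence lives in $\idop[3]$. Here the natural strategy is representation-theoretic and computational: I would identify the relevant iterated-bracket operations as the images of specific classes in $H_\ast\bigl(F_{\idop}(S^{d_1}\vee S^{d_2}\vee S^{d_3})\bigr)$, i.e. as classes in $H_\ast\bigl(\bigl(\idop[3]\sm(S^{d_1}\sm S^{d_2}\sm S^{d_3})\bigr)_{h\Sigma_3}\bigr)$, and show the sum of the three cyclic terms vanishes. The cleanest route is to reduce to a statement about the operad: the three iterated brackets $[x,[y,z]]$, $[y,[z,x]]$, $[z,[x,y]]$ arise by composing $\alpha_2$ with itself in the three ways, which by operadic associativity are governed by the three composites $\circ_1,\circ_2$ of the arity-$2$ fundamental class into itself, landing in $\idop[3]$. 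Thus it suffices to prove that the corresponding three elements of $H_\ast(\idop[3])$, suitably symmetrized, sum to zero. Equivalently, via the cooperad/bar-construction picture of Section~\ref{s:cooperad}, this becomes a boundary computation on the space $\trees{\set 3}$ of weighted $3$-leaf trees: the Jacobi relation is the assertion that a certain $1$-cycle built from the three binary trees bounds, which one reads off from the homeomorphism $|P_3|\cong\trees{\set 3}$ and the known homology $H_\ast(P_3)$.

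I expect the genuine difficulty to be bookkeeping the desuspensions and the $\Sigma_3$-action correctly so that the three terms become comparable classes that can be added. Since $\idop$ is the suspension of the spectral Lie operad, the self-composition of the degree $-1$ bracket carries accumulated desuspensions, and one must check that the operadic suspension conventions laid out in Section~3 make the three associativity composites land in the \emph{same} homological degree with matching $\Sigma_3$-equivariance—this is precisely where the ``shift'' changes antisymmetry into symmetry while leaving Jacobi formally unchanged, as flagged in Section~\ref{s:liealg}. Once the three composites are identified with the three edges/cells of $\trees{\set 3}$, the vanishing of their sum is the statement that the relevant simplicial boundary in $P_3$ is null, which I would verify directly from the explicit tree model; over $\F_2$ no sign reconciliation is needed, so this final step should be a short, explicit computation rather than a subtle argument.
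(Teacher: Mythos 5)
Your proposal follows essentially the same route as the paper: symmetry from the trivial $\Sigma_2$-action on $\idop[2]\simeq S^{-1}$, and the Jacobi identity reduced to an explicit relation in $\idop[3]$ verified from the cell structure of $P_3\simeq S^2\vee S^2$. The paper packages that last step as showing $1+\sigma_\ast+\sigma_\ast^2$ is null-homotopic on $\Sigma^\infty P_3$ via a $2\times 2$ matrix computation --- note that the relevant relation lives among the three $2$-cells corresponding to the binary trees (dually, their three classes sum to a coboundary in $H^2(P_3)$), not a $1$-cycle as you describe, but this is exactly the computation you are gesturing at.
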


\begin{remark}
  The proof of this proposition presented below is joint work with
  Lukas Brantner.
\end{remark}

\begin{proof}
  This is not really a result about $\F_2$-homology: we will show that
  the Lie algebra structure is present already at the level of
  spectra. We have already proved symmetry, when we computed
  $\idop[2]$ and saw it had the trivial $\Sigma_2$-action.

  Now we must prove the Jacobi identity. Consider three elements of
  the homology of $L$, say $x\in H_iL$, $y \in H_jL$ and $z \in H_kL$.
  Just as we defined the bracket $[x,y]$ as coming from a particular
  map $S^{-1} \to \idop[2]$, the bracket $[x,[y,z]] \in H_{i+j+k-2}L$
  can be obtained from a particular map $\nu : S^{-2} \to \idop[3]$ as
  the effect on homology of the double suspension of the composite
  \[S^{-2} \sm L^{\sm 3} \xrightarrow{\nu \sm \id} \idop[3] \sm L^{\sm
      3} \xrightarrow{\alpha_3} L.\]  
  The map $\nu$ simply corresponds to one of the structure maps of the
  $\idop$-operad under the identifications $S \simeq \idop[1]$,
  $S^{-1} \simeq \idop[2]$, namely, it is
  \[S^{-1} \sm (S \sm S^{-1}) \simeq \idop[2] \sm (\idop[1] \sm
    \idop[2]) \to \idop[1+2].\]
  
  The other terms in the Jacobi identity are obtained from $[x,[y,z]]$
  by cyclically permuting the variables, so, letting
  $\sigma = (123) \in \Sigma_3$ and using $\sigma_\ast$ to denote the
  induced action of $\sigma$ on $\idop[3]$, to prove the Jacobi
  identity we must show
  $\nu + \sigma_\ast \circ \nu + \sigma^2_\ast \circ \nu : S^{-2} \to
  \idop[3]$ is null-homotopic.

  We might as well work with $\Sigma^\infty P_3$, before taking
  Spanier--Whitehead duals, and show that
  $1 + \bar{\sigma} + \bar{\sigma}^2 : \Sigma^\infty P_3 \to
  \Sigma^\infty P_3$ is null-homotopic, where $\bar{\sigma}$ denotes
  the action of the 3-cycle $\sigma$ on $\Sigma^\infty P_3$.

  Now, $P_3$ consists of:
  \begin{itemize}
  \item a $1$-simplex, corresponding to the chain $\hat{0} < \hat{1}$,
    connecting the basepoint $\hat{0} = \hat{1}$ with itself, and
  \item three $2$-simplices, say $\tau_1$, $\tau_2$, $\tau_3$, each
    filling in the above circle, corresponding to the three chains
    $\hat{0} < (23|1) < \hat{1}$, $\hat{0} < (13|2) < \hat{1}$, and
    $\hat{0} < (12|3) < \hat{1}$, respectively.
  \end{itemize}

  The $3$-cycle $\sigma$ permutes those three $2$-simplices
  cyclically. We can compute
  $1 + \bar{\sigma} + \bar{\sigma}^2 : \Sigma^\infty P_3 \to
  \Sigma^\infty P_3$ as the composite:
  \[ \Sigma^\infty P_3 \xrightarrow{\Delta} \bigvee^3 \Sigma^\infty
    P_3 \xrightarrow{ 1 \vee \bar{\sigma} \vee \bar{\sigma}^2 }
    \bigvee^3 \Sigma^\infty P_3 \xrightarrow{\nabla} \Sigma^\infty
    P_3. \]

  Non-equivariantly we have an equivalence
  $S^2 \vee S^2 \xrightarrow{\simeq} P_3$, where we will think of the
  first $S^2$ as mapping to $P_3$ by sending the northern hemisphere
  to $\tau_1$, and the southern hemisphere to $\tau_2$; we will
  abbreviate this map $S^2 \to P_3$ as $\tau_{12}$ and use similar
  notation for other maps. We will think of the second wedge summand
  $S^2$ as corresponding to the map $\tau_{23}$.
  
  We can think of a map
  $\bigvee^n \Sigma^\infty P_3 \to \bigvee^m \Sigma^\infty P_3$ as
  given by an $m \times n$ matrix of maps
  $\Sigma^\infty P_3 \to \Sigma^\infty P_3$, and each such map as
  given by a $2 \times 2$ matrix of maps
  $\Sigma^\infty S^2 \to \Sigma^\infty S^2$.

  The matrices of $\Delta$ and $\nabla$ are just the $3 \times 1$ and
  $1 \times 3$ matrices each of whose entries is $I$, the $2 \times 2$
  identity matrix. Once we have the $2 \times 2$ matrix $A$
  representing $\sigma : \Sigma^\infty P_3 \to \Sigma^\infty P_3$, the
  matrix of $1 \vee \bar{\sigma} \vee \bar{\sigma}^2$ is given by the
  $3 \times 3$ diagonal matrix with $I, A, A^2$ along the diagonal.
  
  To compute the matrix $A$, notice that
  $\tilde\sigma \circ \tau_{12} = \tau_{23}$ and
  $\tilde\sigma \circ \tau_{23} = \tau_{31}$ (where $\tilde \sigma$
  denotes the action of $\sigma$ on $P_3$, so that
  $\Sigma^\infty \tilde \sigma = \bar\sigma$). The map
  $\Sigma^\infty \tau_{13}$ is given by
  $\Sigma^\infty \tau_{12} + \Sigma^\infty \tau_{23}$, and $\tau_{31}$
  differs from $\tau_{13}$ by the reflection swapping the hemispheres
  of $S^2$, which has degree $-1$. So, \[A =
  \begin{pmatrix}
    0 & -1 \\ 1 & -1
  \end{pmatrix}.
  \]

  This means the composite map $1 + \bar{\sigma} + \bar{\sigma}^2$ has
  matrix:
  \[
    \begin{pmatrix}
      I & I & I
    \end{pmatrix}
    \begin{pmatrix}
      I & 0 & 0 \\
      0 & A & 0 \\
      0 & 0 & A^2 
    \end{pmatrix}
    \begin{pmatrix}
      I \\ I \\ I
    \end{pmatrix}
    = I + A + A^2,
   \]
   which is readily computed to be $0$.
\end{proof}

\subsection{Behrens's unary Dyer-Lashof-like operations}

In \cite[Chapter 1]{Behrens}, Behrens interprets Arone and Mahowald's
calculation \cite{AroneMahowald} of $H_\ast(\D_n(X))$ for a sphere $X$
in the case $p=2$ in terms of unary homology operations for the layer
of the Goodwillie tower of a reduced finitary homotopy functor
$F : \Topp \to \Topp$. The Arone-Ching chain rule \cite{AroneChing}
gives the symmetric sequence of derivatives of $F$,
$\partial_\ast(F)$, the structure of a bimodule for $\idop$. Behrens's
operations only use the left module structure and could be defined on
the mod $2$ homology of any symmetric sequence which is a left module
over $\idop$. In particular, regarding an $\idop$-algebra as a
symmetric sequence concentrated in degree $0$, we get unary operations
on the mod $2$ homology of an $\idop$-algebra:

\begin{definition}[{adapted from \cite[Section 1.5]{Behrens}}]\label{d:unops}
  Let $L$ be a spectrum equipped with the structure of an
  $\idop$-algebra. We define homology operations
  \[ \Q^j : H_d(L) \to H_{d+j-1}(L),\]
  as follows: for $x \in H_d(L)$, we set
  $\Q^j x := \xi_\ast \sigma^{-1} Q^j x$ where
  \begin{itemize}
  \item $\xi : \Sigma^{-1} \sym{L} \xrightarrow{\simeq} \D_2(L) \to L$
    is part of the $\idop$-algebra structure of $L$,
  \item $\sigma^{-1} : H_{d+j}(\sym{L}) \to H_{d+j-1}(\D_2(L))$ is the
    (de)suspension isomorphism, and
  \item $Q^j : H_d(L) \to H_{d+j}(\sym{L})$ is a Dyer-Lashof
    operation.
  \end{itemize}
\end{definition}

Note that $\Q^j$ has degree $j-1$ but the notation for it uses ``$j$''
because it is named after $Q^j$. Also notice that if $j<d$ and
$x \in H_d(L)$, we have $\Q^j x = 0$ simply because $Q^j x = 0$.

\begin{remark}\label{r:agree}
  By modifying the setting of the definition of the $\Q^j$, we have
  introduced a potential ambiguity! For a free $\idop$-algebra
  $L = F_{\idop}(X)$ on some spectrum $X$, there are two different
  ways in which we could mean $\Q^j x$ for $x \in H_\ast(L)$: using
  definition \ref{d:unops}, or using Behrens' original definition for
  the functor $\Id$. Let us explain what that definition is and show
  it agrees with our definition in this case.

  Given a functor $F : \Topp \to \Topp$, part of the left
  $\idop$-module structure on $\partial_\ast(F)$ is a
  $\Sigma_2 \wr \Sigma_i$-equivariant map
  $\idop[2] \sm \partial_i(F)^{\sm 2} \to \partial_{2i}(F)$. This
  induces a map
  \begin{align*}
    \psi_i : \Sigma^{-1}\sym{\left( \D_i(F)(X) \right)}
    & \simeq \left(\idop[2] \sm \partial_i(F)^{\sm 2} \sm X^{\sm 2i}\right)_{h \Sigma_2 \wr \Sigma_i} \\
    & \to \left(\partial_{2i}(F) \sm X^{\sm 2i}\right)_{h \Sigma_{2i}} \\
    & \simeq \D_{2i}(F)(X),
  \end{align*}
  and for $x \in H_d(\D_i(F)(X))$, Behrens defines
  $\Q^j x = (\psi_i)_\ast \sigma^{-1} Q^j x$.

  Given $x \in H_d(\D_i(\Id)(X)) \subset H_\ast(F_{\idop}(X))$ and a
  $j \ge d$, to show that the $\Q^j x$ from Definition \ref{d:unops}
  agrees with this original version of
  $\Q^j x \in H_{d+j}(\D_{2i}(\Id)(X)) \subset H_\ast(F_{\idop}(X))$
  we just need to unwind the definitions, the point being that both
  the left $\idop$-module structure of $\idop$ and the $\idop$-algebra
  structure of $F_{\idop}(X)$ come directly from the operad structure
  maps of $\idop$.
\end{remark}

\begin{definition}\label{d:qsrel}
  Let $\Qs$ be the $\F_2$-algebra freely generated by symbols
  $\{\Q^j : j \ge 0\}$ subject to the following relations:
  \[ \Q^r \Q^s = \sum_{k=0}^{r-s-1} \binom{2s-r+1+2k}{k} \Q^{2s+1+k}
    \Q^{r-s-1-k}, \quad \text{if } s < r\le 2s. \]
  Also let $\Qs_n$ be the quotient of $\Qs$ obtained by imposing the
  additional relations $\Q^{j_1} \Q^{j_2} \cdots \Q^{j_k} = 0$
  whenever $j_1 < j_2 + \cdots + j_k + n$.
\end{definition}

The relations in $\Qs_n$ allow one to rewrite any monomial in the
$\Q^j$ into a linear combination of \emph{completely unadmissible} or
\emph{CU} monomials, that is, monomials
$\Q^J = \Q^{j_1} \Q^{j_2} \cdots \Q^{j_k}$ where
$J = (j_1, \ldots, j_k)$ is a (possibly empty, corresponding to
$1 \in \Qs$) sequence of integers satisfying $j_i > 2j_{i+1}$ for
$i=1, \ldots, k-1$.

\begin{definition}\label{d:allowmod}
  A positively graded module $M$ over $\Qs$ is called \emph{allowable}
  if whenever $x \in M$ is homogeneous of degree $n$ and
  $j_1 < j_2 + \cdots + j_k + n$, we have
  $\Q^{j_1} \Q^{j_2} \cdots \Q^{j_k} x = 0$.
\end{definition}

\begin{remark}
  This notion of allowable requires more operations to vanish than
  required by degree considerations, that is, more than required by
  the condition $\Q^j x = 0$ when $x \in M_n$, $j<n$. Indeed, that
  last condition only implies
  $\Q^{j_1} \Q^{j_2} \cdots \Q^{j_k} x = 0$ when
  $j_i < j_{i+1} + \cdots j_k + n - (k-i)$ for some $i$; note the
  extra negative term $-(k-i)$. The reason for this extra vanishing
  required is the isomorphism in \cite[Theorem 1.5.1]{Behrens}, that
  in the notation used there, sends
  $\sigma^k \Q^{j_1} \Q^{j_2} \cdots \Q^{j_k} \iota_n \mapsto Q^{j_1}
  Q^{j_2} \cdots Q^{j_k} \iota_n$. The $Q^j$ do have that vanishing
  property just for degree reasons.
\end{remark}

\begin{proposition}\label{p:unaries}
  Given an $\idop$-algebra $L$, the action of the operations $\Q^j$
  makes $H_{> 0}(L)$ into an allowable $\Qs$-module.
\end{proposition}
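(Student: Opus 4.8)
The proposition bundles two claims: that the operations $\Q^j$ satisfy the relations of Definition \ref{d:qsrel}, so that their action factors through $\Qs$ and makes $H_{\ge 0}(L)$ a $\Qs$-module; and that this module is \emph{allowable}, meaning $\Q^{j_1}\cdots\Q^{j_k}x = 0$ whenever $|x| = n$ and $j_1 < j_2 + \cdots + j_k + n$. Both are universal identities among iterated unary operations applied to a single homology class, so the plan is to verify them on one universal example and then transport them to an arbitrary $L$ by naturality.

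For the reduction I would fix $x \in H_d(L)$, represent it by a map $\bar{x} : S^d \to \HF \sm L$, and pass to the adjoint map of $(\HF \sm \idop)$-algebras
\[ \tilde{x} : \HF \sm F_{\idop}(S^d) \to \HF \sm L, \]
which carries the fundamental class $\iota_d$ to $x$. Since the $\Q^j$ are defined (Definition \ref{d:unops}) purely from the $\idop$-algebra structure maps, they are natural for maps of $(\HF \sm \idop)$-algebras, so $\Q^J x = \tilde{x}_\ast(\Q^J \iota_d)$ for every monomial $\Q^J$. Thus any relation or vanishing holding among the classes $\Q^J \iota_d$ in $H_\ast(F_{\idop}(S^d))$ pushes forward to the corresponding statement for $x$, and it suffices to treat the fundamental class $\iota_d$ for every $d$. (A single sphere suffices because the operations in question are all unary. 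The genuinely degenerate instances, such as a relation $\Q^r \Q^s$ with $s < d$, have both sides zero by the instability $\Q^j y = 0$ for $j < |y|$ and can be dismissed directly.)

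On the universal example the needed facts are exactly the Arone--Mahowald computation as repackaged by Behrens. By Remark \ref{r:agree} the operations $\Q^j$ of Definition \ref{d:unops} agree on $F_{\idop}(S^d)$ with Behrens's original operations, so the relations they satisfy are precisely those Behrens computes in \cite{Behrens}, namely the relations of Definition \ref{d:qsrel}; this delivers the $\Qs$-module structure. For allowability I would use the isomorphism of \cite[Theorem 1.5.1]{Behrens}, which identifies $\sigma^k\Q^{j_1}\cdots\Q^{j_k}\iota_d$ with the iterated ordinary Dyer--Lashof class $Q^{j_1}\cdots Q^{j_k}\iota_d$. The latter vanishes as soon as $j_1 < |Q^{j_2}\cdots Q^{j_k}\iota_d| = j_2 + \cdots + j_k + d$, by the instability $Q^j y = 0$ for $j < |y|$; reading this back through the isomorphism yields $\Q^{j_1}\cdots\Q^{j_k}\iota_d = 0$ under exactly the allowability hypothesis.

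The main obstacle will be the allowability, not the relations. As the remark following the definition of allowable modules stresses, allowability is strictly stronger than anything the single-operation instability $\Q^j y = 0$ formally forces on iterated operations; the additional vanishing is real and is produced only by the precise shape of \cite[Theorem 1.5.1]{Behrens}, which trades the shifted operations $\Q^j$ for honest Dyer--Lashof operations whose instability is sharper. The remaining care is in setting up the naturality so the reduction to $\iota_d$ is legitimate---in particular verifying $\Q^J x = \tilde{x}_\ast \Q^J \iota_d$---which is precisely the point checked in Remark \ref{r:agree}.
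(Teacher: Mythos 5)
Your proposal is correct and follows essentially the same route as the paper: represent $x$ by a map of $\HF$-module spectra, pass to the adjoint map of $(\HF \sm \idop)$-algebras out of $\HF \sm F_{\idop}(S^n)$, use naturality of the $\Q^j$ to reduce everything to the fundamental class $\iota_n$, and then read off both the relations and the allowability from \cite[Theorem 1.5.1]{Behrens}, which identifies $\bigoplus_k H_\ast(\D_{2^k}(S^n))$ with the quotient $\Qs_n\{\iota_n\}$. Your extra discussion of why allowability (rather than the weaker degree-forced vanishing) holds on the universal example matches the paper's remark following the definition of allowable modules.
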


\begin{proof}
  We will deduce that the operations act allowably and satisfy the
  relations in the algebra $\Qs$ from \cite[Theorem 1.5.1]{Behrens}.
  That theorem states that
  \[ \bigoplus_{k \ge 0} H_\ast(\D_{2^k}(S^n)) = \Qs_n \{ \iota_n
    \},\] where $\iota_n$ is the fundamental class of
  $\tilde{H}_n(S^n)$ (thought of as living in
  $H_n(\D_1(S^n)) \cong \tilde{H}_n(S^n)$), and the operations $\Q^j$
  obey all the relations in the algebra $\Qs_n$.

  Given any class $x \in H_n(L)$, we can represent it by map
  $x : \HF[n] \to \HF \sm L$ of $\HF$-module spectra. This corresponds
  to a map $x^\dagger : \HF \sm F_{\idop}(S^n) \to \HF \sm L$ of
  $(\HF \sm \idop)$-algebras. The naturality of the $\Q^j$ operations
  shows that given any $R \in \Qs$ we have
  $H_\ast(x^\dagger) R\iota_n = Rx$, so that if the relation
  $R\iota_n = 0$ is satisfied in $H_\ast(F_{\idop}(S^n))$, the
  relation $R x = 0$ holds in $H_\ast(L)$.
\end{proof}

Notice that it also follows from theorem \cite[Theorem
1.5.1]{Behrens}, that the \emph{CU}-monomials $\Q^J$ are linearly
independent.

\section{Algebraic structure of homology of spectral Lie algebras}
\label{s:algstruct}

We can now state the algebraic structure of the homology of an $\idop$-algebra:

\begin{definition}\label{d:allowlie}
  An \emph{allowable $\Qs$-$\sLie$-algebra} is a graded $\F_2$-vector
  space $M$, equipped with
  \begin{itemize}
  \item a shifted Lie bracket $[-,-] : M_i \otimes M_j \to M_{i+j-1}$,
    and
  \item the structure of an allowable $\Qs$-module on $M_{> 0}$,
  \end{itemize}
  such that
  \begin{enumerate}
  \item\label{d:square} $\Q^k x = [x,x]$ if $x \in M_k$, and
  \item\label{d:vanishing} $[x,\Q^k y] = 0$ for any $x \in M_i$,
    $y \in M_j$.
  \end{enumerate}
\end{definition}

\begin{remark}
  Notice that condition \ref{d:vanishing} only has content when
  $k \ge j$, since otherwise $\Q^k y = 0$.
\end{remark}

\begin{theorem}\label{t:hstar-allowlie}
  Given any $\idop$-algebra $L$, the operations described above give
  its mod $2$ homology $H_\ast(L)$ the structure of an allowable
  $\Qs$-$\sLie$-algebra.
\end{theorem}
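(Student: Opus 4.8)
The plan is to verify the four ingredients of Definition \ref{d:allowlie} in turn. The first two—that the shifted bracket makes $H_\ast(L)$ a $\sLie$-algebra, and that the $\Q^j$ give $H_{\ge 0}(L)$ the structure of an allowable $\Qs$-module—are precisely Propositions \ref{p:jacobi} and \ref{p:unaries}, which are already in hand. So the work is to check the two compatibility conditions relating the bracket to the operations: condition \ref{d:square}, that $\Q^k x = [x,x]$ for $x \in H_k(L)$, and condition \ref{d:vanishing}, that $[x,\Q^k y] = 0$.

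For condition \ref{d:square} I would use that the bottom Dyer--Lashof operation on a class of degree $k$ is the external square: $Q^k x = q_\ast(x \otimes x)$, where $q\colon L^{\sm 2} \to \sym{L}$ is the canonical map. Unwinding Definition \ref{d:unops}, $\Q^k x = \xi_\ast \sigma^{-1} q_\ast(x\otimes x)$. The commutative diagram relating $\xi = \xi_2$ to the unquotiented structure map $\alpha_2$ gives $\xi \circ c = \alpha_2$, where $c$ is the canonical map to homotopy orbits, which for $n=2$ is $\Sigma^{-1}q$; naturality of the suspension isomorphism then turns $\xi_\ast\sigma^{-1}q_\ast(x\otimes x)$ into $(\alpha_2)_\ast\sigma^{-1}(x\otimes x) = [x,x]$. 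This step is short.

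Condition \ref{d:vanishing} is the heart of the matter. The idea is that $\Q^k y$ lands in the image of the binary structure map applied to the extended square of $y$, so by operad associativity the composite computing $[x,\Q^k y]$ factors through the ternary operations. Writing $\gamma\colon \idop[2]\sm\idop[2]\to\idop[3]$ for the operad composition grafting the inner bracket onto the second input of the outer one, $[x,\Q^k y]$ is obtained by applying $\gamma$ to $x$ together with the equivariant square of $y$, passing to homotopy orbits for the subgroup $\langle(23)\rangle\subset\Sigma_3$ that interchanges the two copies of $y$, and then applying the structure map. Now $(23)$ acts trivially on the source $\idop[2]\sm\idop[2]$—the outer factor is untouched and $\idop[2]=S^{-1}$ carries the trivial $\Sigma_2$-action—which is exactly why the Dyer--Lashof classes arise there. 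On $H_\ast(\idop[3];\F_2)\cong\F_2^2$, however, a computation in the style of the proof of Proposition \ref{p:jacobi}—with $P_3\simeq S^2\vee S^2$, tracking that $(23)$ fixes the simplex for $(23|1)$ and interchanges those for $(13|2)$ and $(12|3)$—shows that $(23)$ acts by a nontrivial involution, hence isomorphically to the regular representation of $\langle(23)\rangle$. Thus $H_\ast(\idop[3];\F_2)$ is a free $\F_2[\langle(23)\rangle]$-module, and smashing it into the source makes the diagonal $(23)$-action free mod $2$. A free module has no higher $\langle(23)\rangle$-homology, so the classes built from $\tilde{H}_{>0}(B\langle(23)\rangle)$—that is, the operations $\Q^k y$ with $k>|y|$—map to zero after grafting, giving $[x,\Q^k y]=0$ in that range. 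The remaining bottom case $k=|y|$ is $[x,\Q^{|y|}y] = [x,[y,y]]$ by condition \ref{d:square}, and this vanishes by the characteristic-$2$ Jacobi computation already carried out in Section \ref{s:liealg}.

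The main obstacle is the operad-associativity factorization together with the equivariant bookkeeping: one must make precise that $\gamma$ is $\langle(23)\rangle$-equivariant from the trivial action on $\idop[2]\sm\idop[2]$ to the mod-$2$ free action on $\idop[3]$, and that under this identification the class carried by $\sigma^{-1}Q^k y$ for $k>|y|$ is exactly the one landing in the positive-degree part of $B\langle(23)\rangle$ that freeness annihilates. Once this is set up, the vanishing in the upper range is forced by the freeness of the $\langle(23)\rangle$-action on $H_\ast(\idop[3];\F_2)$, and the bottom case is purely algebraic.
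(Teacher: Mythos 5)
Your proposal matches the paper's proof: the same decomposition into the two already-proved propositions plus two compatibility lemmas, the same treatment of the bottom case $k=|y|$ via $[x,[y,y]]=0$ from the characteristic-$2$ Jacobi identity, and the same key input for $k>|y|$, namely the operad-associativity factorization through $\idop[3]$ together with the fact that $\langle(23)\rangle$ acts on $H_\ast(\idop[3];\F_2)$ by the regular representation, forcing the relevant homotopy orbits to be concentrated in a single degree. The paper packages this last point as an explicit equivariant equivalence $P_3 \simeq \Sigma^2 \Sigma^\infty_+ \Sigma_2$ rather than via freeness of the $\F_2[\Sigma_2]$-module and vanishing of higher group homology, but that is the same computation.
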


\begin{proof}
  We have already shown that the bracket gives $H_\ast(L)$ the
  structure of a $\sLie$-algebra and of an allowable $\Qs$-module in
  Propositions \ref{p:jacobi} and \ref{p:unaries}. We will prove
  properties 1 and 2 from Definition \ref{d:allowlie} in Lemmas
  \ref{l:square} and \ref{l:vanishing} below.
\end{proof}

It will be convenient to recall a construction of the Dyer-Lashof
operation $Q^k : H_j(L) \to H_{j+k}(\sym{L})$ for $k \ge j$. A class
$x \in H_j(L)$ can be represented by a map $x : \HF[j] \to \HF \sm L$
of $\HF$-module spectra. Applying the second extended power functor we
get a map $\symt{x} : \symt{(\HF[j])} \to \symt{(\HF \sm L)}$, where
we have used $\tp$ for the smash product of $\HF$-module spectra. Since
the free $\HF$-module functor is symmetric monoidal and preserves
homotopy colimits, $\symt{(\HF \sm Y)} \simeq \HF \sm \sym{Y}$;
so that we can regard $\symt{x}$ as being a map
$\symt{(\HF[j])} \to \HF \sm \sym{L}$.

Now, $(\HF[j])^{\tp\,2}$ has trivial $\Sigma_2$-action, so
$\symt{(\HF[j])} \simeq \HF[2j] \sm \Sigma^\infty_+ B\Sigma_2$. One way
to see this is to recall that the homotopy category of $\HF$-module
spectra is equivalent to the derived category of complexes of
$\F_2$-vector spaces, and the definition of the symmetry of the tensor
product has no signs in that case. Alternatively, one can think of
spaces, before smashing with $\HF$: the fibration $(S^j)^{\sm 2} \to
\sym{S^j} \to B\Sigma_2$ is $\F_2$-orientable, and the above
equivalence is an instance of the Thom isomorphism.

Let $q_{k-j} : \HF[k-j] \to \HF \sm \Sigma^\infty_+ B\Sigma_2$ pick
out the unique non-zero class of degree $k-j$ in $H_\ast(B\Sigma_2)$;
then $Q^k x$ is represented by
\begin{align*}
\HF[j+k] \xrightarrow{q_{k-j} \tp \id_{\HF[2j]}} & (\HF \sm
  \Sigma^\infty_+ B\Sigma_2) \tp \HF[2j] \\
  & \simeq \symt{(\HF[j])}
  \xrightarrow{\symt{x}} \HF \sm \sym{L}.
\end{align*}

\begin{lemma}\label{l:square}
  For any $\idop$-algebra $L$ and $x \in H_k(L)$, we have
  $\Q^k x = [x,x]$.
\end{lemma}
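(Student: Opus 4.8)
The plan is to unwind both operations—$\Q^k x$ and $[x,x]$—down to explicit maps of $\HF$-module spectra and check they agree. Both are defined using the same structure map $\xi : \Sigma^{-1} \sym{L} \cong \D_2(L) \to L$, so the whole lemma should reduce to comparing what feeds into $\xi$. The shifted bracket $[x,x]$ is, by definition, $\xi_\ast$ applied to the image of $x \tp x$ under the suspended multiplication $\Sigma^{-1} L^{\sm 2} \to \D_2(L)$; more precisely, using the diagram relating $\alpha_2$ and $\xi$, the class $[x,x] \in H_{2k-1}(L)$ is $\xi_\ast \sigma^{-1}$ applied to the image of $x \tp x$ in $H_{2k}(\sym{L})$ under the canonical map $Y^{\tp 2} \to \sym[2]{Y}$. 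On the other side, $\Q^k x = \xi_\ast \sigma^{-1} Q^k x$ by Definition \ref{d:unops}. Since both are $\xi_\ast \sigma^{-1}$ applied to classes in $H_{2k}(\sym{L})$, \textbf{it suffices to show that in top degree the Dyer--Lashof class $Q^k x$ equals the image of $x \tp x$} under the transfer-type map $H_{2k}(L^{\tp 2}) \to H_{2k}(\sym{L})$.

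\emph{The key computation} is therefore the standard fact about the bottom/top Dyer--Lashof operation. Using the construction recalled just before the lemma, $Q^k x$ for $x \in H_k(L)$ (the edge case $j = k$, so $k - j = 0$) is represented by precomposing $\symt{x}$ with $q_0 \tp \id$, where $q_0 : \HF[0] \to \HF \sm \Sigma^\infty_+ B\Sigma_2$ picks out the degree-$0$ class in $H_\ast(B\Sigma_2)$. But the degree-$0$ class in $H_\ast(B\Sigma_2)$ is precisely the image of the fundamental class of the basepoint inclusion $S^0 \to \Sigma^\infty_+ B\Sigma_2$, i.e.\ it corresponds to the composite $\HF[2k] \to \HF[2k] \sm \Sigma^\infty_+ B\Sigma_2 \cong \symt{(\HF[k])}$ factoring through $(\HF[k])^{\tp 2}$ before descending to the homotopy orbits. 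Chasing this through $\symt{x}$ shows that $Q^k x$ is exactly the image of $x \tp x \in H_{2k}(L^{\tp 2})$ under the canonical map to $H_{2k}(\sym{L})$—which is what we wanted. Concretely, $q_0 \tp \id_{\HF[2k]}$ lands in the summand coming from $B\Sigma_2^{(0)} = \ast$, so the operation $Q^k$ in its lowest excess is represented by the external square followed by the orbit map.

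\emph{First I would} spell out the bracket $[x,x]$ via the diagram relating $\alpha_2$ and $\xi_2$, being careful that the ``$x \tp x$ then pass to homotopy orbits'' map is what the suspended $\alpha_2$ induces; \emph{then I would} identify the lowest Dyer--Lashof operation $Q^k$ on a degree-$k$ class with that same composite using the explicit $q_0$-description; \emph{finally} applying $\xi_\ast \sigma^{-1}$ to both identified classes gives the equality. \textbf{The main obstacle} will be bookkeeping the identifications $\symt{(\HF \sm L)} \cong \HF \sm \sym{L}$ and $\symt{(\HF[k])} \cong \HF[2k] \sm \Sigma^\infty_+ B\Sigma_2$ compatibly, so that ``$q_0$ picks out the $0$-class'' genuinely corresponds to ``factor through $L^{\tp 2}$ before the orbit projection''—in other words, verifying that the filtration-zero piece of the extended power is the external smash square. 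Once that identification is pinned down, the two sides are visibly the same class in $H_{2k}(\sym{L})$ and the lemma follows.
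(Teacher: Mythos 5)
Your proposal is correct and follows essentially the same route as the paper: both reduce the lemma to comparing the two maps $\HF[k]\tp\HF[k]\to\symt{(\HF[k])}$ feeding into $\symt{x}$ and then $\HF\sm\Sigma\xi$, and both identify $q_0\tp\id_{\HF[2k]}$ (the bottom Dyer--Lashof operation) with the canonical quotient map to homotopy orbits, which is what represents $[x,x]$. The only quibble is calling the orbit map ``transfer-type''---it is the plain quotient map, not a transfer---but this does not affect the argument.
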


\begin{proof}
  This follows easily by unwinding the definitions: if $x$ is
  represented by a map $x : \HF[k] \to \HF \sm L$, both sides are
  represented by the desuspension of some composite
  \[ \HF[k] \tp \HF[k] \to \symt{\left(\HF[k] \right)}
  \xrightarrow{\symt{x}} \HF \sm \sym{L} \xrightarrow{\HF\sm\Sigma\xi}
  \HF \sm L, \]
  where $\xi : \Sigma^{-1} \sym{L} \to L$ is the structure map. For
  $[x,x]$ the first map is taken to be the quotient map, while for
  $\Q^k x$ it is $q_0 \tp \id_{\HF[2k]}$, which agrees with the
  quotient map.
\end{proof}

\begin{lemma}\label{l:vanishing}
  For an $\idop$-algebra $L$ and $x \in H_i(L)$, $y \in H_j(L)$ we
  have $[x,\Q^k y] = 0$.
\end{lemma}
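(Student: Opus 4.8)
The cases $k<j$ and $k=j$ are easy to dispose of. If $k<j$ then $\Q^k y = 0$ for degree reasons. If $k=j$ then $\Q^j y = [y,y]$ by Lemma \ref{l:square}, and $[x,[y,y]]=0$ follows from the Jacobi identity exactly as in the computation $[[x,x],y]=0$ of Section \ref{s:liealg} (brackets involving a self-bracket vanish in characteristic $2$). So from now on I assume $k>j$, where $\Q^k y$ genuinely uses a positive-degree Dyer--Lashof input.

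The plan is to prove the vanishing at the level of spectra, by exhibiting $[x,\Q^k y]$ as the image of a homology class that is forced to be zero by a freeness property of the $\Sigma_2$-action on $\idop[3]$. First I would use associativity of the operad action to factor the composite computing $[x,\Q^k y]$. Both the bracket $[x,-]$ and the doubling $y \mapsto \xi_\ast\sigma^{-1}Q^\bullet y$ producing $\Q^k y$ are built from the binary structure map $\alpha_2$; grafting the inner copy of $\idop[2]$ (carrying the two $y$-inputs) into the second slot of the outer copy gives the operad composition $\gamma : \idop[2] \sm \idop[2] \to \idop[3]$, and operad associativity identifies the composite with the ternary structure map $\alpha_3$ applied to the inputs $x,y,y$. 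Because $\Q^k y$ is produced from the second extended power of $y$, i.e. from the $\Sigma_2$-homotopy orbits swapping the two $y$'s, the whole composite factors through the $\langle(23)\rangle$-homotopy orbits of $M := \idop[3] \sm L^{\sm 3}$, where $(23) \in \Sigma_3$ permutes the two $y$-slots and acts on $\idop[3]$. Concretely $[x,\Q^k y]$ is the image of a homology class under
\[ H_\ast\bigl(M_{h\langle(23)\rangle}\bigr) \to H_\ast\bigl(M_{h\Sigma_3}\bigr) \xrightarrow{(\alpha_3)_\ast} H_\ast(L), \]
and the relevant class is the one built from the Dyer--Lashof input $q_{k-j} \in H_{k-j}(B\Sigma_2)$, hence lives in skeletal filtration $k-j \ge 1$ of $M_{h\langle(23)\rangle}$.

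The key point is that this positive-filtration part vanishes. Using the identification $\idop[3] = \Map_\ast(P_3, S)$ with $P_3 \simeq S^2 \vee S^2$ from the proof of Proposition \ref{p:jacobi}, I would compute the matrix of the transposition $(23)$ on $\tilde H_2(P_3;\F_2) \cong \F_2^2$ exactly as the matrix of the $3$-cycle $\sigma$ was computed there; one finds that $(23)$ acts by a single unipotent Jordan block, i.e. as the regular representation $\F_2[\Sigma_2]$, and dually the same holds for $\tilde H_\ast(\idop[3];\F_2) = \tilde H_{-2}(\idop[3];\F_2)$. Since tensoring a free $\F_2[\Sigma_2]$-module with any $\F_2[\Sigma_2]$-module (here $H_\ast(L)^{\otimes 3}$ with the diagonal $(23)$-action) is again free, $H_\ast(M;\F_2)$ is free over $\F_2[\Sigma_2]$. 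Therefore the homotopy-orbit spectral sequence $E^2_{s,t} = H_s(\Sigma_2; H_t M) \Rightarrow H_{s+t}(M_{h\langle(23)\rangle})$ is concentrated in filtration $s=0$, so its abutment has no classes in positive filtration. The class representing $[x,\Q^k y]$ lies in filtration $k-j \ge 1$ and is thus already zero in $H_\ast(M_{h\langle(23)\rangle})$; mapping to $H_\ast(L)$ gives $[x,\Q^k y]=0$.

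The main obstacle is the bookkeeping in the second paragraph: carefully carrying out the factorization through $\gamma$ and $\alpha_3$ at the level of $\HF$-module spectra (as in the construction recalled before Lemma \ref{l:square}), and checking that the $\Sigma_2$ appearing in the definition of $\Q^k$ is precisely the copy $\langle(23)\rangle$ acting on $\idop[3]$, so that the freeness computation applies and the input $q_{k-j}$ genuinely records skeletal filtration $k-j$. Once that identification is in place, the representation-theoretic computation---that $(23)$ acts on $\tilde H_\ast(P_3;\F_2)$ as the regular representation---together with the resulting collapse of the spectral sequence finishes the proof.
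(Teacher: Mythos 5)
Your reduction to the case $k>j$, your factorization of $[x,\Q^k y]$ through the $\langle(23)\rangle$-homotopy orbits, and your computation that $(23)$ acts on $\tilde H_2(P_3;\F_2)$ as the regular representation $\F_2[\Sigma_2]$ all agree with the paper, which records the same fact as the equivariant equivalence $P_3\simeq\Sigma^2\Sigma^\infty_+\Sigma_2$, so that $\idop[3]$ is a free $\Sigma_2$-spectrum. The gap is in the final deduction. The filtration on $H_\ast(M_{h\langle(23)\rangle})$ coming from the homotopy-orbit spectral sequence is an \emph{increasing} filtration $F_0\subset F_1\subset\cdots$ with $F_s/F_{s-1}=E^\infty_{s,\ast}$; when $E^\infty_{s,\ast}=0$ for $s>0$ this says $F_0=F_1=\cdots=H_\ast(M_{h\langle(23)\rangle})$, so \emph{every} class ``lies in filtration $k-j$'' and the statement carries no information --- it does not say $F_{k-j}=0$. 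Nor does the fact that the preimage class in the source is detected in filtration exactly $k-j\ge 1$ help: a filtration-preserving map can send a class detected in filtration $k-j$ to a nonzero class detected in filtration $0$, and here $F_0$ of the target is all of $H_\ast(M_{h\langle(23)\rangle})\cong H_\ast(\Sigma^{-2}L^{\sm 3})$, which is far from zero. So ``the action is free, hence the class in positive filtration dies'' is a non sequitur as written.

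There are two known ways to close the gap, and both need an input beyond the collapse of the spectral sequence. The paper's route is to first replace $L$ by representing spheres: with $x:\HF[i]\to\HF\sm L$ and $y:\HF[j]\to\HF\sm L$, the class $[x,\Q^k y]$ is represented by a composite through $(\idop[3]\sm\HF[i+2j])_{h(\Sigma_1\times\Sigma_2)}$, and because the input is a single sphere your freeness computation shows this middle spectrum is $\HF[i+2j-2]$, concentrated in one degree; the source $\HF[i+j+k-2]$ sits in a different degree when $k\neq j$, so the composite is null by a degree count. Keeping $L^{\sm 3}$ in the middle, as you do, destroys exactly this single-degree concentration, which is what your filtration argument was meant to replace but cannot. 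The other route (Brantner's, cf.\ Remark \ref{r:lukas}) is to identify the map itself: an equivariant map from a $\Sigma_2$-spectrum with trivial action (which $(\HF[j])^{\tp 2}$ is, but $(\HF\sm L)^{\tp 2}$ is not) to a free one induces, on homotopy orbits, a map factoring through the transfer $\Sigma^\infty_+ B\Sigma_2\to S$, which vanishes on mod $2$ homology. Either fix completes your argument; as stated, the last step does not.
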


\begin{proof}
  For $k<j$, $\Q^k y = 0$. For $k=j$, by Lemma \ref{l:square},
  $[x,\Q^k y] = [x,[y,y]]$ and this is $0$ as explained in section
  \ref{s:liealg}.

  To analyze the case $k>j$, we begin by unwinding the definitions in
  terms of representing maps $x : \HF[i] \to \HF \sm L$ and
  $y : \HF[j] \to \HF \sm L$. To make the next diagram fit on the
  page, we introduce some temporary notation: $[i] := \HF[i]$,
  $\bar{L} := \HF \sm L$,
  $\overline{B\Sigma_2} := \HF \sm \Sigma^\infty_+ B \Sigma_2$ and
  $\partial_n := \idop[n]$. Then $[x,\Q^k y] \in H_{i+j+k-2}(L)$ is
  represented by the the composite from the top left corner to the
  bottom right corner in the following commutative diagram:
  \[
    \xymatrix{
      [i + j + k - 2]
      \ar[d]^{\id_{[i-1]} \tp \Sigma^{-1}q_{k-j} \tp \id_{[2j]}} & \\
      \Sigma^{-1}[i] \tp \Sigma^{-1}(\overline{B \Sigma_2} \tp [2j])
      \ar[d]_\simeq & \\
      \partial_2 \sm (\partial_1 \sm [i]) \tp \symt{(\partial_2 \sm
        [j])}
      \ar[d]_{(\partial_1 \sm x) \tp \symt{(\partial_2 \sm y)}} \ar[r]^-{\theta_{[i],[j]}} &
      (\partial_3 \sm [i+2j])_{h(\Sigma_1 \times \Sigma_2)}
      \ar[d]^{(\partial_3 \sm x \tp y^{\tp 2})_{h(\Sigma_1 \times \Sigma_2)}}
      \\
      \partial_2 \sm (\partial_1 \sm \bar{L}) \tp \symt{(\partial_2
        \sm \bar{L})}
      \ar[d]_{\id\tp(\HF\sm\xi)} \ar[r]^-{\theta_{\bar{L},\bar{L}}} &
      (\partial_3 \sm \bar{L}^{\sm 3})_{h(\Sigma_1 \times \Sigma_2)}
      \ar[d]^{\xi'_3}
      \\
      \partial_2 \sm \bar{L} \tp \bar{L}
      \ar[r]_-{\HF\sm\alpha_2} &
      \bar{L}.
    }
  \]
  The horizontal arrows whose labels involve $\theta$ are defined
  using the structure map
  $\theta : \partial_2 \sm \partial_1 \sm \partial_2 \to \partial_3$,
  namely,
  \[\theta_{X,Y} : \partial_2 \sm (\partial_1 \sm X) \tp
    \symt{(\partial_2 \sm Y)} \to (\partial_3 \sm X \tp Y^{\tp
      2})_{h(\Sigma_1 \times \Sigma_2)}\]
  is given by
  $(\theta \sm \id_{X \tp Y^{\tp 2}})_{h(\Sigma_1 \times \Sigma_2)}$.    

  The arrow labeled $\xi'_3$ is $\HF$ smashed with the composite
  \[ \left( \partial_3 \sm L^{\sm 3} \right)_{h(\Sigma_1 \times
      \Sigma_2)} \xrightarrow{(\alpha_3)_{h(\Sigma_1 \times
        \Sigma_2)}} L \sm \Sigma^\infty_+ B \Sigma_2 \to L,\]
    and that the bottom square commutes follows from the definition
    of algebra for an operad.

    To conclude the proof, we will show that
    $(\partial_3 \sm [i+2j])_{h(\Sigma_1 \times \Sigma_2)}$ is
    concentrated in degree $i+2j-2$, which means the composite from
    the top of the diagram to that point must be null if $k \neq j$.
    Now, that spectrum is equivalent to
    $\HF \sm \Sigma^{i+2j} (\partial_3)_{h(\Sigma_1 \times \Sigma_2)}$
    because the $(\Sigma_1 \times \Sigma_2)$-action on $[i+2j]$ is
    trivial. So we need to describe $\partial_3$ as a
    $(\Sigma_1 \times \Sigma_2)$-spectrum. Recall the description of
    $P_3$ from the proof of Proposition \ref{p:jacobi}: it consists of
    three $2$-dimensional disks with their boundaries identified, one
    for each of the three partitions $(12|3)$, $(13|2)$, $(23|1)$. The
    $(\Sigma_1 \times \Sigma_2)$-action fixes one of the disks and
    swaps the other two, so that $P_3$ is equivariantly equivalent to
    $\Sigma^2 \Sigma^\infty_+ \Sigma_2$, the double suspension of the
    regular representation of $\Sigma_2$. Then $\partial_3$ is
    $\Sigma^{-2} \Sigma^\infty_+ \Sigma_2$ and
    $(\partial_3)_{h(\Sigma_1 \times \Sigma_2)} \simeq S^{-2}$, as
    required.
\end{proof}

\begin{remark}\label{r:lukas}
  Lukas Brantner has written an alternative proof of this lemma that
  will appear in \cite{Lukas}. His argument analyzes the structure map
  $\theta$ showing it is the double desuspension of the transfer map
  $\Sigma^\infty_+ B\Sigma_2 \to S$ and thus vanishes on mod $2$
  homology. I am grateful to him for sharing his proof with me at a
  time when I was still confused about the ``bottom operation'' and
  thought this result only held for $k>j$.
\end{remark}

\section{Homology of free spectral Lie algebras on simply-connected spaces}
\label{s:homfree}

Now we can state our main result:

\begin{theorem}\label{t:homfree}
  Given a simply-connected space $X$, the mod $2$ homology of the free
  $\idop$-algebra on $\Sigma^\infty X$ is the free allowable
  $\Qs$-$\sLie$-algebra $\frQsLie(\tilde{H}_\ast(X))$ on the reduced
  homology $\tilde{H}_\ast(X)$.

  More precisely, the canonical map
  $\frQsLie(\tilde{H}_\ast(X)) \to H_\ast(F_{\idop}(\Sigma^\infty
  X))$ is an isomorphism.
\end{theorem}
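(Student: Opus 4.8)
The plan is to reduce the statement to a purely combinatorial comparison of graded vector spaces by exploiting functoriality, and then to match both sides against the single-sphere computation of Behrens \cite[Theorem 1.5.1]{Behrens} together with Hilton's basic-product decomposition. First I would invoke Proposition \ref{p:field}: since $H_\ast(\Sigma^\infty X) = \tilde{H}_\ast(X)$, the left-hand side $H_\ast(F_{\idop}(\Sigma^\infty X))$ depends only on the graded $\F_2$-vector space $V = \tilde{H}_\ast(X)$, and the right-hand side $\frQsLie(V)$ manifestly does too; the canonical comparison map is natural in $V$. Because $X$ is simply-connected, $V$ is concentrated in degrees $\ge 2$, so it is realized as $\tilde{H}_\ast$ of a wedge of spheres $\bigvee_\alpha S^{n_\alpha}$ with each $n_\alpha \ge 2$, and since every functor in sight commutes with filtered colimits I may assume this wedge is finite. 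Thus it suffices to treat $X = \bigvee_\alpha S^{n_\alpha}$.

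Next I would pin down the algebraic side. I claim the free allowable $\Qs$-$\sLie$-algebra on $V$ decomposes as $\bigoplus_w \Qs_{|w|}\{w\}$, a direct sum of allowable $\Qs$-modules on one generator each, indexed by a Hall basis $\{w\}$ of basic products of the free \emph{totally isotropic} shifted Lie algebra $\sLie^\ti(V)$. The point is that the two defining relations of Definition \ref{d:allowlie} exactly decouple the bracket from the operations: the relation $\Q^k x = [x,x]$ absorbs all self-brackets into the bottom operations, collapsing the underlying Lie algebra to the totally isotropic one, while $[x,\Q^k y] = 0$ forbids bracketing after any operation has been applied, so that each basic product $w$ supports an independent free allowable $\Qs$-tower. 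Granting this, each summand $\Qs_{|w|}\{w\}$ is, by \cite[Theorem 1.5.1]{Behrens}, abstractly $\bigoplus_k H_\ast(\D_{2^k}(S^{|w|}))$; in particular the single-generator case of the whole theorem is precisely Behrens's theorem.

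On the topological side I would compute $H_\ast(F_{\idop}(\Sigma^\infty X)) = \bigoplus_n H_\ast(\D_n(\bigvee_\alpha S^{n_\alpha}))$ by decomposing each layer. Writing $(\Sigma^\infty X)^{\sm n}$ as a wedge over $\Sigma_n$-orbits of functions $\{1,\dots,n\}\to A$ yields $\D_n(\bigvee_\alpha S^{n_\alpha}) = \bigvee_{\sum m_\alpha = n}(\idop[n]\sm \bigwedge_\alpha (S^{n_\alpha})^{\sm m_\alpha})_{h\prod_\alpha \Sigma_{m_\alpha}}$. The ``strict'' part of these homotopy orbits is governed by the reduced homology of the partition complex, which as a $\Sigma_n$-representation recovers the Lie operad up to a sign twist and shift (as recalled in the introduction); taking coinvariants over $\prod_\alpha \Sigma_{m_\alpha}$ reproduces exactly the multidegree-$(m_\alpha)$ piece of the free Lie algebra, i.e. the basic products of that weight (Hilton's theorem). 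The ``homotopy'' part of the orbit construction is what produces the Dyer-Lashof tower over each such class. To assemble these into the comparison map I would, for each basic product $w$, build a map $S^{|w|} \to F_{\idop}(\Sigma^\infty X)$ by iterated shifted brackets of the fundamental classes and use naturality of the $\Q^j$ (as in Remark \ref{r:agree}) to obtain $\Qs_{|w|}\{w\} \to H_\ast(F_{\idop}(\Sigma^\infty X))$; summing over $w$ gives the canonical map.

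Finally I would check this map is an isomorphism. Surjectivity says the brackets and operations generate, which follows from the layer decomposition above once one knows the partition-complex homology accounts for the Lie part and the homotopy-orbit filtration accounts for the operations. Injectivity is the crux: it requires that the proposed basis---the classes $\Q^J w$, for $w$ a basic product and $\Q^J$ a CU-monomial allowable on a class of degree $|w|$---be linearly independent in $H_\ast(F_{\idop}(\Sigma^\infty X))$. I expect this to be the main obstacle, since it must combine the linear independence of CU-monomials (from \cite[Theorem 1.5.1]{Behrens}) with the linear independence of the Hall basis of basic products, and must rule out any ``mixing'' relations across different basic products. The vanishing relation $[x,\Q^k y]=0$ of Lemma \ref{l:vanishing} is what guarantees no such mixing occurs, but verifying that the two independent countings match dimension-by-dimension---equivalently, that the homotopy-orbit filtration on the partition-complex computation collapses with no further differentials or extensions---is the delicate step.
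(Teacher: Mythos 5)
Your overall architecture matches the paper's: reduce to finite wedges of spheres via Proposition \ref{p:field} and filtered colimits, decompose the algebraic side $\frQsLie(V) \cong \aQs(F_{\sLie^\ti}(V))$ into summands indexed by a Hall basis of basic products (this is exactly Proposition \ref{p:frQsLie}), and match each summand against Behrens's single-sphere computation. But there is a genuine gap at the topological heart of the argument, and you have flagged it yourself: you never actually establish that $H_\ast(F_{\idop}(S^{d_1}\vee\cdots\vee S^{d_k}))$ decomposes as a sum over basic products of copies of $H_\ast(F_{\idop}(S^{|w|}))$. Your proposed route --- analyze the homotopy-orbit construction on $\bigl(\idop[n]\sm\bigwedge_\alpha (S^{n_\alpha})^{\sm m_\alpha}\bigr)_{h\prod_\alpha\Sigma_{m_\alpha}}$, with a ``strict part'' giving the Lie operad and a ``homotopy part'' giving the Dyer--Lashof tower --- is not a filtration argument that can be expected to go through as stated. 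Already in the single-sphere case the answer $H_\ast(\D_n(S^m))=0$ unless $n$ is a power of $2$ reflects massive cancellation that is invisible on the naive associated graded of any such filtration; that is the content of the hard Arone--Mahowald computation, and the multivariable version you would need is not easier. Deferring this to ``the delicate step'' is deferring essentially the whole theorem.

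The missing idea, which the paper uses instead, is a spectrum-level splitting that bypasses any new extended-power computation: the Arone--Kankaanrinta theorem \cite[Theorem 0.1]{AroneKankaanrinta}, obtained by applying Goodwillie calculus to both sides of the Hilton--Milnor equivalence $\Omega\Sigma(X_1\vee\cdots\vee X_k)\simeq\prod_w\Omega\Sigma Y_w(X_1,\ldots,X_k)$. Summing it over multidegrees yields an equivalence of spectra
\[ F_{\idop}\bigl(\Sigma(X_1\vee\cdots\vee X_k)\bigr)\simeq\bigvee_{w}F_{\idop}\bigl(\Sigma Y_w(X_1,\ldots,X_k)\bigr), \]
indexed by basic products $w$, which for $X_i=S^{d_i-1}$ reduces the finite-wedge case to wedges of single spheres where \cite[Theorem 1.5.1]{Behrens} applies verbatim. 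This also resolves your injectivity worry for free: linear independence of the classes $\Q^J w$ across distinct basic products $w$ holds because they live in distinct wedge summands, so no ``mixing'' relations need to be ruled out by hand. I would encourage you to replace the heuristic filtration analysis in your third paragraph with this splitting; the rest of your outline then closes up into the paper's proof.
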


\begin{remark}
  The restriction to suspension spectra of simply-connected spaces is a
  consequence of the use of the Hilton--Milnor theorem in section
  \ref{s:wedge}; it would be interesting to work out more general
  cases, and that seems to be the main obstacle. The restrictions to
  positive degrees in Definition \ref{d:allowmod} and Proposition
  \ref{p:unaries} are not essential, as the homology of
  $\bigoplus_{k \ge 0} H_\ast(\D_{2^k}(S^n))$ can also be calculated
  for negative $n$. Indeed, since $\HF \sm \sym[m]{\Sigma^j S^n}
  \simeq \Sigma^{jm} (\HF \sm \sym[m]{S^n})$ (by the same arguments
  used near the end of the proof of Theorem \ref{t:hstar-allowlie}:
  the symmetry of the tensor product of $\HF$-spectra has no signs, or
  a Thom isomorphism), one can easily show deduce the homology for
  negative spheres from the Arone--Mahowald calculation. We have left
  the restrictions to positive degrees \emph{there} for simplicity,
  since we do not know how to avoid them in the main result anyway.
\end{remark}

We will prove Theorem \ref{t:homfree} in special cases of increasing
generality in the next few sections, but first we will give a
convenient construction of the free allowable $\Qs$-$\sLie$-algebra.
This will involve the notion of \emph{basic products}, that we now recall:

\begin{definition}
  The \emph{basic products} on a set of \emph{letters}
  $x_1, \ldots, x_n$ are defined and ordered recursively as follows:
  
  The basic products of weight $1$ are $x_1, x_2, \ldots, x_n$,
  ordered arbitrarily. 

  Suppose the basic products of weight less than $k$ have been defined
  and ordered. A basic product of weight $k$ is a formal bracket
  $\fb{w_1, w_2}$ where
 \begin{itemize}
 \item $w_1$ and $w_2$ are basic products whose weights add up to $k$,
 \item $w_1<w_2$ in the order defined so far,
 \item if $w_2 = \fb{w_3, w_4}$ for some basic products
   $w_3$ and $w_4$, then we require that $w_3 \le w_1$.
 \end{itemize}
 Once all the products of weight $k$ are defined, they are ordered
 arbitrarily among themselves and declared to be greater that all
 basic products of lower weight. We will assume these choices of order
 are fixed once and for all.
\end{definition}

Marshall Hall proved in \cite{HallBasis} that the basic products form
a basis for the free Lie algebra on $x_1, x_2, \ldots, x_n$. That
result is for the totally isotropic, ungraded version of Lie algebra,
but it clearly extends, at least for $R = \F_2$ where the grading does
not introduce signs, to both $\Lie^\ti$-algebras and
$\sLie^\ti$-algebras: if the letters have assigned degrees $|x_i|$, we
assign to each basic product $w$ with $\ell$ letters of total degree
$d$, the degree $|w|=d$ in the $\Lie^\ti$ case and $|w| = d-\ell$ in
the $\sLie^\ti$ case.

\begin{proposition}\label{p:frQsLie}
  The free allowable $\Qs$-$\sLie$-algebra $\frQsLie(V)$ on a graded
  $\F_2$-vector space $V$ is the free allowable $\Qs$-module on the
  free $\sLie^\ti$ algebra on $V$, denoted by
  $\aQs(F_{\sLie^\ti}(V))$, equipped with a bracket defined as
  follows:

  First, fix a basis $\beta$ of $V$ and consider the basis of
  $\aQs(F_{\sLie^\ti}(V))$ consisting of all $\Q^J w$ where:
  \begin{itemize}
  \item $J = (j_1, \ldots, j_k)$ is a $CU$-sequence of integers, and
  \item $w$ is a basic product of degree at most $j_k$ in letters
    from $\beta$.
  \end{itemize}
  Now define the bracket on $\aQs(F_{\sLie^\ti}(V))$ on that basis as
  indicated below and extended bilinearly:
  \begin{itemize}
  \item $[\Q^{J_1} w_1, \Q^{J_2} w_2] = 0$ if $J_1 \neq \emptyset$ or $J_2 \neq \emptyset$.
  \item The bracket $[w_1,w_2]$ of basic products is defined
    recursively as follows:
    \begin{enumerate}
    \item If $\fb{w_1, w_2}$ is a basic product, we let
      $[w_1,w_2] = \fb{w_1, w_2}$.
    \item\label{i:square} $[w_1,w_2] = \Q^{|w_1|} w_1$ if $w_1 = w_2$.
    \item $[w_1,w_2] = [w_2,w_1]$  if $w_1 > w_2$.
      
    \item $[w_1,w_2] = [w_3,[w_1,w_4]] + [w_4,[w_1,w_3]]$  if $w_1 < w_2$ and $w_2=[w_3,w_4]$ with $w_1<w_3$.
    \end{enumerate}
  \end{itemize}
\end{proposition}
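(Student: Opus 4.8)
The plan is to verify two things: first, that $A := \aQs(F_{\sLie^\ti}(V))$ equipped with the bracket described in the statement really is an allowable $\Qs$-$\sLie$-algebra in the sense of Definition \ref{d:allowlie}; and second, that it satisfies the universal property of the free such algebra on $V$. The reduction that organizes everything is the observation that, in \emph{any} allowable $\Qs$-$\sLie$-algebra, property \ref{d:vanishing} forces $[x, \Q^{j_1}\cdots\Q^{j_k} y] = 0$ as soon as $k \ge 1$, by applying \ref{d:vanishing} with $\Q^{j_2}\cdots\Q^{j_k} y$ in place of $y$. Hence, after extending bilinearly and using symmetry, the only brackets carrying information are those between basic products (the $J=\emptyset$ summand $F_{\sLie^\ti}(V)$), which is exactly why the first bracket rule is allowed to kill everything else.

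Next I would show the bracket is well defined and satisfies the Jacobi identity. Rules (1), (3), (4) are nothing but graded symmetry together with the usual collection process for Lie brackets: rule (4) is precisely the Jacobi identity over $\F_2$, rewritten as $[w_1,[w_3,w_4]] = [w_3,[w_4,w_1]] + [w_4,[w_1,w_3]]$, and the three cases $w_1<w_2$, $w_1=w_2$, $w_1>w_2$ are mutually exclusive, with rule (4)'s hypothesis $w_1<w_3$ being exactly the failure of the condition $w_3\le w_1$ that would make $[w_1,w_2]$ basic. Thus the recursion is deterministic, and by Marshall Hall's theorem \cite{HallBasis} the collection process terminates and consistently rewrites any bracket of basic products in the Hall basis. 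The sole departure from the totally isotropic situation is rule (2), which reroutes a self-bracket $[w,w]$—zero in $F_{\sLie^\ti}(V)$—to $\Q^{|w|}w$. To see this is consistent I would compare with the free $\sLie$-algebra $F_{\sLie}(V)$ over $\F_2$. Let $L' \subseteq A$ be the span of the basic products together with the bottom operations $\Q^{|w|}w$. Every bracket produced by the recursion lands in $L'$, and by the first paragraph every bracket with a higher operation vanishes, so $L'$ is closed under the bracket; the linear map $F_{\sLie}(V) \to L'$ sending $w \mapsto w$ and $[w,w] \mapsto \Q^{|w|}w$ then matches the two bracket recursions termwise. Granting the characteristic-$2$ basis lemma below, this map is an isomorphism of non-associative algebras, so the bracket on $L'$ inherits the Jacobi identity from $F_{\sLie}(V)$; Jacobi for an arbitrary triple in $A$ reduces to this case because any triple involving a higher operation has all three cyclic terms equal to $0$. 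Finally properties \ref{d:square} and \ref{d:vanishing} hold by rule (2) (extended off the basis using $\F_2$-linearity of $\Q^k$ and additivity of $x \mapsto [x,x]$) and by the first bracket rule, respectively.

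For the universal property, the inclusion $\eta : V \to A$ of the weight-one basic products is a map of graded vector spaces, and $V$ generates $A$ under the bracket and the $\Q$-operations, which gives uniqueness of any extension. Given a map $f : V \to M$ to an allowable $\Qs$-$\sLie$-algebra, I would set $\tilde f(\Q^J w)$ to be $\Q^J$ applied to the bracket expression $w$ read in $M$, each letter replaced by its $f$-image. Compatibility with the $\Qs$-action is immediate from freeness of the module, and compatibility with the bracket reduces, by bilinearity and the first bracket rule, to $\tilde f([w_1,w_2]) = [f_\ast w_1, f_\ast w_2]$ for basic products. This holds by induction on the depth of the recursion, because rules (1)--(4) are literally instances of the defining axioms of $M$: rule (4) is Jacobi in $M$, rule (3) is symmetry, and rule (2) is property \ref{d:square}. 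Hence $\tilde f$ is the desired morphism and $A \cong \frQsLie(V)$.

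The hard part is the characteristic-$2$ basis lemma invoked above: over $\F_2$ the free $\sLie$-algebra $F_{\sLie}(V)$ has a basis consisting of the basic products together with their self-brackets $[w,w]$, these self-brackets spanning a square-zero central ideal (by the identities of Section \ref{s:liealg}) whose quotient is $F_{\sLie^\ti}(V)$. The containment and the vanishing of all brackets into that ideal are immediate from Section \ref{s:liealg}, and $F_{\sLie^\ti}(V)$ is controlled by Hall's theorem; the genuine obstacle is the linear independence of the self-brackets $\{[w,w]\}$, equivalently a degreewise dimension count giving $\dim F_{\sLie} = \dim F_{\sLie^\ti} + \dim(\text{squares})$. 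This is the characteristic-$2$ analogue of the splitting of a free restricted Lie algebra into its Lie part and its Frobenius part, and proving it cleanly is where the real combinatorial content of the proposition lies.
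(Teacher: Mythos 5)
Your overall architecture (verify the axioms, then the universal property) matches the paper's, and the universal-property half together with the reduction $[x,\Q^J y]=0$ for $J\neq\emptyset$ is fine. But there is a genuine gap in the half that carries the content, and you have located it yourself: your verification of the Jacobi identity rests on the ``characteristic-$2$ basis lemma'' that the free $\sLie$-algebra over $\F_2$ has a basis given by the basic products together with the self-brackets $[w,w]$, and in particular that these self-brackets are linearly independent. You do not prove this, and it cannot simply be cited: Hall's theorem in \cite{HallBasis} concerns the totally isotropic version $F_{\sLie^\ti}(V)$, and the linear independence of the squares in $F_{\sLie}(V)$ is of essentially the same depth as the proposition itself (in the paper it is ultimately a \emph{consequence} of the main theorem via Behrens's computation, not an input). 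There is also a mild circularity in ``matching the two bracket recursions termwise'': to run a collection process inside $F_{\sLie}(V)$ and land in your claimed basis, you already need the termination and consistency statements you are trying to establish for $L'$.

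The paper's proof avoids any such lemma by adapting Hall's direct verification rather than transporting the Jacobi identity from another algebra. The two observations that make this work are: (a) replacing $[w,w]=0$ by $[w,w]=\Q^{|w|}w$ does not change the termination of the collection process, because both $0$ and $\Q^{|w|}w$ are bracket-free expressions and any bracket containing them vanishes, so the reduction of $[x,y]$ to basic products uses exactly the same steps as in Hall's case; and (b) the only point in Hall's proof of the Jacobi identity where $[w,w]=0$ is invoked is the case $w_1=w_2$, where the surviving term $[w_3,[w_1,w_1]]$ must vanish --- and it still does, since $[w_3,\Q^{|w_1|}w_1]=0$ by condition (\ref{d:vanishing}) of Definition \ref{d:allowlie}. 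If you replace the appeal to the basis lemma with these two checks, your argument closes. (Separately, when verifying condition (\ref{d:square}) on basis elements $x=\Q^Jw$ with $J\neq\emptyset$ you should note that $[x,x]=0$ by the first bracket rule while $\Q^{|x|}x=0$ is forced by allowability, since $|x|=j_1+\cdots+j_k+|w|-k<j_1+\cdots+j_k+|w|$; rule (\ref{i:square}) only covers the case $J=\emptyset$.)
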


\begin{proof}
  In \cite{HallBasis}, Hall defines the $\Lie^\ti$ bracket on the
  linear span of the basic products as above, except that
  (\ref{i:square}) is replaced with $[w_1,w_1]=0$. He then proves that
  the recursion in the definition does terminate and that it produces
  a $\Lie^\ti$-algebra, that is, that the bracket is anti-symmetric,
  satisfies the Jacobi identity and $[x,x]=0$ for all $x$. A
  straightforward adaptation of his proof will show that the above
  definition also terminates and produces an allowable
  $\Qs$-$\sLie$-algebra. But before we explain that, let us assume the
  bracket does define a $\Qs$-$\sLie$-algebra and check that it is
  free. Let $f : V \to E$ be a morphism of graded vector spaces where
  $E$ is an allowable $\Qs$-$\sLie$-algebra. There is a unique
  bracket-preserving extension of $f$ to the linear span of the basic
  products, and therefore a unique extension of $f$ to a morphism of
  allowable $\Qs$-modules $\aQs(F_{\sLie^\ti}(V)) \to E$. That this
  unique extension is also a morphism of allowable
  $\Qs$-$\sLie$-algebras is clear from the above definition of the
  bracket.

  \smallskip

  And now we check the bracket correctly produces an allowable
  $\Qs$-$\sLie$-algebra. First of all, notice that the degrees of the
  various parts of the definition are correct for a shifted bracket.

  Secondly, having $[w_1,w_1] = \Q^{|w_1|} w_1$ instead of $0$ does
  not affect termination of the recursion at all. Both $0$ and
  $\Q^{|w_1|} w_1$ have the following properties: (1) they are
  expressions containing no further brackets, so if a term reduces to
  one of them that term requires no further reduction, and (2) if they
  appear \emph{inside} a bracket, the term containing that bracket is
  $0$. This means that the process of reducing a bracket $[x,y]$ to a
  linear combination of basic products by repeatedly applying the
  recursive definition uses exactly the same steps in both Hall's
  $\Lie^\ti$ case and in our $\Qs$-$\sLie$ case, the only difference
  being that any $[w,w]$ that appear on their own (that is, not inside
  a bracket) will reduce to $\Q^{|w|} w$ instead of $0$.
  
  Next we must check that this bracket satisfies $[x,\Q^ky] = 0$,
  $[x,x]=\Q^{|x|} x$, symmetry and the Jacobi identity. All of these
  need only be checked on the given basis. Symmetry and that
  $[x,\Q^k y] = 0$ are directly built in to the definition, as is the
  fact that $[x,x] = \Q^{|x|} x$ when $x$ is a basic product. When
  $x = \Q^J w$ for $J = (j_1, \ldots, j_k)$ with $k \ge 1$, we have
  $[x,x] = 0$ (since $J \neq \emptyset$), but we also have
  $|x| = j_1 + \cdots + j_k + |w| - k < j_1 + \cdots + j_k + |w|$ so
  that $\Q^{|x|} x = \Q^{|x|} \Q^J w = 0$ is required by allowability.

  Now only the Jacobi identity remains to be checked:
  \[ \sum_{\mathrm{cyclic}} \left[\Q^{J_1}w_1, [\Q^{J_2}w_2,
      \Q^{J_3}w_3]\right] = 0. \]
    If any $J_i \neq \emptyset$, all three terms are $0$, so assume
    all $J_i = \emptyset$. This remaining case can be proved exactly
    as in \cite[Section 3, p. 579]{HallBasis}, with one tiny change.
    There is only one place in that proof where the condition
    $[w,w]=0$ is used: it is at the very beginning of the argument for
    the Jacobi identity. The proof starts by considering the case when
    two of the $w_i$ are equal, say $w_1=w_2$. Then the terms
    $[w_1,[w_1,w_3]]$ and $[w_1,[w_3,w_1]]$ cancel by anti-symmetry
    and the remaining term is $0$ since $[w_3,[w_1,w_1]] = [w_3, 0]$.
    In our case, that last term still vanishes:
    $[w_3,[w_1,w_1]] = [w_3, \Q^{|w_1|} w_1] = 0$. The rest of Hall's
    argument goes through verbatim.
\end{proof}

\subsection{The free spectral Lie algebra on a sphere}

For $X = S^n$, Theorem \ref{t:homfree} is essentially a restatement of
\cite[Theorem 1.5.1]{Behrens} using Proposition \ref{p:frQsLie}.
Indeed, the free $\sLie^\ti$-algebra on
$\tilde{H}_\ast(S^n) = \F_2 \{ \iota_n \}$ is just $\F_2\{\iota_n\}$
again, so that
$\frQsLie(\tilde{H}_\ast(S^n)) = \aQs(\F_2\{\iota_n\})$, which is what
Behrens shows $H_\ast(F_{\idop}(S^n))$ to be.

\subsection{The free spectral Lie algebra on a finite wedge of spheres}
\label{s:wedge}

Now we consider the case of
$X = S^{d_1} \vee S^{d_2} \vee \cdots \vee S^{d_k}$ for some integers
$d_i \ge 2$; in this case $F_{\idop}(X)$ can be computed from the
results of \cite{AroneKankaanrinta}, which we now summarize.

\medskip 

Consider a bit more generally the case
$X = \Sigma(X_1 \vee \cdots \vee X_k)$, where the $X_i$ are some
connected spaces. In \cite{AroneKankaanrinta} there is a computation
of
$\D_n(\Id)(X) = \Sigma \D_n(\Omega\Sigma)(X_1 \vee \cdots \vee X_k)$
that ``takes multi-variable Goodwillie derivatives on both sides of the
Hilton--Milnor theorem''.

The Hilton--Milnor theorem (see \cite[Section XI.6]{Whitehead}) gives a
homotopy equivalence between
$\Omega \Sigma X = \Omega \Sigma (X_1 \vee \cdots \vee X_k)$ and the
weak\footnote{This means the homotopy colimit of the finite products,
  where the maps in the colimit include a product into a larger
  product using the basepoint on the extra factors.} infinite product
$\prod_w \Omega \Sigma Y_w(X_1, \ldots, X_k)$, where $w$ runs over the
basic products on $k$ letters, and each $Y_w$ is the functor obtained
from the word $w$ by interpreting the $i$-th letter as $X_i$, and the
bracket as the smash product; so that
$Y_w(X_1, \ldots, X_k) = X_1^{\sm m_1(w)} \sm \cdots \sm X_k^{\sm
  m_k(w)}$
with $m_i(w)$ counting the number of occurrences of the $i$-th letter
in $w$.

Given a basic product $w$ there is a map
$h_w : Y_w(X_1, \ldots, X_n) \to \Omega \Sigma X$ obtained from $w$ by
interpreting the $i$-th letter as the canonical map
$X_i \hookrightarrow X \to \Omega\Sigma X$ and interpreting the
bracket as the Samelson product. Let
$\bar{h}_w : \Omega \Sigma Y_w(X_1, \ldots, X_n) \to \Omega \Sigma X$
be the extension of $h_w$ to a map of $A_\infty$-spaces and for any
set $B$ of basic words let $\bar{h}_B$ be the composite
$ \prod_{w \in B} \Omega \Sigma Y_w(X_1, \ldots, X_n)
\xrightarrow{\prod \bar{h}_w} \left(\Omega \Sigma X\right)^{B}
\xrightarrow{\mu} \Omega \Sigma X$.
Then the Hilton--Milnor theorem can be stated as saying that the
colimit of $\bar{h}_B$ over all finite sets of basic products is an
equivalence.

\bigskip

The result Arone and Kankaanrinta obtain from the Hilton--Milnor
equivalence \cite[Theorem 0.1]{AroneKankaanrinta} is the following
equivalence of spectra:
\begin{align*}
  \bigl( \partial_n(\Omega \Sigma) \sm \Sigma^\infty\left(X_1^{\sm
  n_1} \sm \cdots \sm X_k^{\sm n_k}\right)\bigr)_{h(\Sigma_{n_1}
  \times \cdots \times \Sigma_{n_k})} \simeq \\ \bigvee_{d\mid\gcd(n_1,\ldots,n_k)}\left( \bigvee_{w \in
  W(\frac{n_1}{d}, \ldots, \frac{n_k}{d})} \D_d(\Omega\Sigma)\left(Y_w(X_1, \ldots  X_k)\right) \right),
\end{align*}

where $W(\frac{n_1}{d}, \ldots, \frac{n_k}{d})$ is the set of basic
products on $k$-letters involving the $i$-th letter exactly
$\frac{n_i}{d}$ times.

We can use this to get a nice formula for $\Sigma^{-1} F_{\idop}(X)$:
\begin{align*}
  & F_{\partial_\ast(\Omega\Sigma)}(X_1 \vee \cdots \vee X_k) = \bigvee_n \D_n(\Omega\Sigma)(X_1 \vee \cdots \vee X_k) \\
  = & \bigvee_n \bigl( 
      \partial_n(\Omega \Sigma) \sm \Sigma^\infty( X_1 \vee \cdots \vee  X_k)^{\sm n} \bigr)_{h\Sigma_n} \\
  = & \bigvee_n \left( \partial_n(\Omega \Sigma) \sm \bigvee_{n_1 + \cdots + n_k=n} \Ind^{\Sigma_n}_{\Sigma_{n_1} \times \cdots \times \Sigma_{n_k}} (X_1^{\sm n_1} \sm \cdots \sm X_k^{\sm n_k}) \right)_{h \Sigma_n} \\
  = & \bigvee_{n_1, \ldots, n_k} \left( \partial_{n_1 + \cdots + n_k}(\Omega \Sigma)
      \sm
      (X_1^{\sm n_1} \sm \cdots \sm  X_k^{\sm n_k}) \right)_{h (\Sigma_{n_1} \times \cdots \times \Sigma_{n_k})} \\
  = & \bigvee_{n_1, \ldots, n_k} \left( \bigvee_{d\mid\gcd(n_1,\ldots,n_k)}\left( \bigvee_{w \in
      W(\frac{n_1}{d}, \ldots, \frac{n_k}{d})} \D_d(\Omega\Sigma)\left( Y_w(X_1, \ldots, X_k) \right)\right)   \right) \\
  = & \bigvee_{m_1, \ldots, m_k, d} \left( \bigvee_{w \in
      W(m_1, \ldots, m_k)} \D_d(\Omega\Sigma)\left( Y_w(X_1, \ldots, X_k) \right)\right) \\
  = & \bigvee_{w \in W} F_{\partial_\ast(\Omega \Sigma)}\left( Y_w(X_1, \ldots, X_k) \right), 
\end{align*}
where the last wedge runs over all basic products in $k$ letters, and
the next to last step uses the change of variables
$m_i = \frac{n_i}{d}$: this gives a bijection between all
$(k+1)$-tuples $(n_1, \ldots, n_k, d)$ of positive integers with
$d \mid \gcd(n_1, \ldots, n_k)$, and all $(k+1)$-tuples
$(m_1, \ldots, m_k, d)$ of positive integers.

This in turn tells us, for $F_{\idop}$, that:
\[ F_{\idop}\left( \Sigma(X_1 \vee \cdots \vee X_k) \right) =
  \bigvee_{w \in W} F_{\idop}\left(\Sigma Y_w(X_1, \ldots, X_k) \right).\]
Plugging in $X_i = S^{d_i-1}$, for some $d_i \ge 2$, we get that
\[ F_{\idop}\left( S^{d_1} \vee \cdots \vee S^{d_k} \right) =
  \bigvee_{w \in W} F_{\idop}\left( S^{1+\sum_i m_i(w)(d_i-1)} \right), \]
  so that Proposition \ref{p:frQsLie} allows us to conclude Theorem
  \ref{t:homfree} for the wedge $S^{d_1} \vee \cdots \vee S^{d_k}$
  from the case of single spheres.

\subsection{The free spectral Lie algebra on a simply-connected space}

Bootstrapping from the previous cases to $F_{\idop}(X)$ for general
simply-connected $X$ is purely formal using the fact that
$H_\ast(F_{\idop}(X))$ only depends on the homology of $X$, as shown
in Proposition \ref{p:field}.

Let
$\phi_X : \frQsLie(\tilde{H}(X)) \to H_\ast(F_{\idop}(\Sigma^\infty
X))$
be the canonical map coming from the universal property of the the
free allowable $\Qs$-$\sLie$-algebra.

If $X$ is an arbitrary wedge of spheres, each of dimension at least
$2$, then we can write $X$ as a filtered colimit of finite wedges of
spheres and these fall under the previous case. Since homology and the free functors we are using all commute with filtered colimits, the result also holds for such an $X$.

Now, for a general simply-connected $X$, pick an $\F_2$-basis
$\{x_j\}$ of $\tilde{H}_\ast(X)$ and use it to construct an
equivalence of $\HF$-module spectra
$f : \bigvee_j \HF[|x_j|] \to \HF \sm \Sigma^\infty X$. The natural
transformation $\phi$ is a special case of a natural transformation
$\psi_V : \frQsLie(\pi_\ast(V)) \to \pi_\ast(F_{\HF \sm \idop}(V))$
for $\HF$-module spectra $V$, in the sense that
$\phi_X = \psi_{\HF \sm \Sigma^\infty X}$. In the naturality square
\[ \xymatrix{ \frQsLie(\pi_\ast(\bigvee_j \HF[|x_j|]))
  \ar[rr]^-{\psi_{(\bigvee_j \HF[|x_j|])}}
  \ar[d]_{\frQsLie(\pi_\ast(f))} && \pi_\ast(F_{\HF \sm \idop}(\bigvee_j \HF[|x_j|]))  \ar[d]^{\pi_\ast(F_{\HF \sm \idop}(f))} \\
  \frQsLie(\pi_\ast(\HF \sm \Sigma^\infty X)) \ar[rr]^-{\psi_{(\HF
  \sm \Sigma^\infty X)}} && \pi_\ast(F_{\HF \sm \idop}(\HF \sm
  \Sigma^\infty X)),}\]
all maps are known to be isomorphisms (the vertical ones because $f$
is an equivalence, the top one because it is
$\phi_{(\bigvee_j S^{|x_j|})}$) except the bottom one, which therefore
also is an isomorphism.

\section{Divided power algebras and Koszul duality}
\label{s:divpow}

In this section we review some results from \cite{Kuhn} and explain
heuristically their relation to our computation.

Let $R$ be an augmented $E_\infty$-ring spectrum, or more precisely,
an augmented commutative $S$-algebra. In \cite{Kuhn}, Kuhn discusses a
model for the Topological Andr\'e--Quillen homology spectrum $TAQ(R)$
of $R$ given by
\[TAQ(R) = \mathop{\mathrm{hocolim}}_{n \to \infty} \Omega^n S^n
  \otimes R\] where $\otimes$ denotes the tensoring of augmented
commutative $S$-algebras over pointed spaces (\cite[Definition
7.2]{Kuhn}). As mentioned in that paper, this model was shown by
Mandell to be equivalent to standard definitions of $TAQ(R)$.

Kuhn defines a filtration on $TAQ(R)$ (coming from a filtration on
these tensor products) and identifies \cite[Corollary 7.5]{Kuhn} the
associated graded pieces as
\[F_dTAQ(R)/F_{d+1}TAQ(R) \simeq \left(P_d \sm (R/S)^{\sm d}\right)_{h\Sigma_d},\]
where $P_d$ is the partition complex and $R/S$
denotes the cofiber of the unit $\eta : S \to R$ of the ring spectrum
$R$.

One can use the spectral sequence associated to this filtration to
compute the cohomology $H^\ast(TAQ(R); \F)$ and Kuhn identifies the
$E_1$-page \cite[Theorem 8.1]{Kuhn} for $\F$ of characteristic $p$ as
\[ E_1^{\ast,\ast}(TAQ(R);\F) = \cR(\Sigma L_r (\Sigma^ {-1}
  \tilde{H}^\ast (R; \F))),\]
where $L_r$ is the free restricted Lie algebra functor and $\cR$ is,
roughly speaking, the free algebra-over-the-Dyer-Lashof-algebra
functor.

\medskip

Before explaining how this result is related to this paper, a quick
reminder of divided power algebras is in order. Associated to an
operad $\cO$ one has the free algebra monad whose functor part is
\[X \mapsto \bigvee_{n \ge 0} \left(\cO(n) \sm X^{\sm
      n}\right)_{h\Sigma_n},\]
but also, under certain relatively mild conditions, another
monad defined using homotopy fixed points instead:
\[X \mapsto \bigvee_{n \ge 0} \left(\cO(n) \sm X^{\sm
      n}\right)^{h\Sigma_n}.\]
The algebras for this second monad are called \emph{divided power
  $\cO$-algebras} because of the special case of the commutative
operad in vector spaces over a field, where they are the traditional
divided power algebras. In the case of the classical Lie operad over
$\F_p$, the divided power algebras turn out to be restricted Lie
algebras (\cite[Theorem 0.1]{FresseDivPow}).

\medskip

Kuhn's result is related to the results in this paper through Koszul
duality: the spectral Lie operad should be thought of as Koszul dual
to the commutative operad in spectra, whose algebras are, of course,
$E_\infty$-ring spectra. Koszul duality of operads, roughly speaking,
should result in an equivalence between the category of algebras for
one operad and the category of divided power algebras for the other.
In this case $D \circ TAQ$ should implement the equivalence (where $D$
denotes Spanier--Whitehead duality), taking $E_\infty$-ring spectra to
divided power spectral Lie algebras ---which might be called
\emph{spectral restricted Lie algebras}, at least when working with
the version for $H\F_p$-module spectra.

The associated graded of Kuhn's filtration of $TAQ(R)$, namely,
\[\bigvee_{d \ge 0} \left(P_d \sm (R/S)^{\sm d}\right)_{h\Sigma_d}\]
looks a lot like the free $\idop$-algebra on $R/S$, except that it has
the partition complex $P_d$ instead of its Spanier--Whitehead dual
$\idop[n]$. The cohomology of the associated graded is given by the
homotopy groups of the mapping spectrum
\[\Map(\bigvee_{d \ge 0} \left(P_d \sm (R/S)^{\sm
      d}\right)_{h\Sigma_d}, H\F) \simeq
  \bigvee_{d \ge 0} \left(\idop[d] \sm \Map(R/S,H\F)\right)^{h
    \Sigma_d}\]
so one can think of Kuhn's formula for the $E_1$-page
as giving the homology of the free divided power $\idop$-algebra on
$R/S$. From this point of view it is no surprise that Kuhn's formula
involves free \emph{restricted} Lie algebras, while our formulas
involve free Lie algebras.


\begin{thebibliography}{10}

\bibitem{AroneChing}
G.~Arone and M.~Ching.
\newblock Operads and chain rules for the calculus of functors.
\newblock {\em Ast\'erisque}, (338):vi+158, 2011.

\bibitem{AroneKankaanrinta}
G.~Arone and M.~Kankaanrinta.
\newblock The homology of certain subgroups of the symmetric group with
  coefficients in {$\mathcal{L}ie(n)$}.
\newblock {\em J. Pure Appl. Algebra}, 127(1):1--14, 1998.

\bibitem{AroneMahowald}
G.~Arone and M.~Mahowald.
\newblock The {G}oodwillie tower of the identity functor and the unstable
  periodic homotopy of spheres.
\newblock {\em Invent. Math.}, 135(3):743--788, 1999.

\bibitem{Behrens}
M.~Behrens.
\newblock The {G}oodwillie tower and the {EHP} sequence.
\newblock {\em Mem. Amer. Math. Soc.}, 218(1026):xii+90, 2012.

\bibitem{Lukas}
L.~Brantner.
\newblock The {L}ubin-{T}ate theory of spectral {L}ie algebras.
\newblock In preparation.

\bibitem{ChingBar}
M.~Ching.
\newblock Bar constructions for topological operads and the {G}oodwillie
  derivatives of the identity.
\newblock {\em Geom. Topol}, 9:833--933, 2005.

\bibitem{CohenLadaMay}
F.~R. Cohen, T.~J. Lada, and J.~P. May.
\newblock {\em The homology of iterated loop spaces}, volume 533.
\newblock Springer-Verlag Berlin, 1976.

\bibitem{EKMM}
A.~Elmendorf, I.~Kriz, M.~Mandell, and J.~May.
\newblock {\em Rings, modules, and algebras in stable homotopy theory}.
\newblock Number~47. American Mathematical Soc., 2007.

\bibitem{FresseDivPow}
B.~Fresse.
\newblock On the homotopy of simplicial algebras over an operad.
\newblock {\em Trans. Amer. Math. Soc.}, 352(9):4113--4141, 2000.

\bibitem{GoodwillieIII}
T.~G. Goodwillie.
\newblock Calculus. {III}. {T}aylor series.
\newblock {\em Geom. Topol.}, 7:645--711 (electronic), 2003.

\bibitem{HallBasis}
M.~Hall.
\newblock A basis for free {L}ie rings and higher commutators in free groups.
\newblock {\em Proceedings of the American Mathematical Society},
  1(5):575--581, 1950.

\bibitem{Hilton}
P.~J. Hilton.
\newblock On the homotopy groups of the union of spheres.
\newblock {\em J. London Math. Soc}, 30(154-172):71, 1955.

\bibitem{Johnson}
B.~Johnson.
\newblock The derivatives of homotopy theory.
\newblock {\em Trans. Amer. Math. Soc.}, 347(4):1295--1321, 1995.

\bibitem{Kjaer}
J.~J. Kjaer.
\newblock On the odd primary homology of free algebras over the spectral {L}ie
  operad.
\newblock {\tt arXiv:1608.06605 [math.AT]}, 2016.

\bibitem{Kuhn}
N.~J. Kuhn.
\newblock The {M}c{C}ord model for the tensor product of a space and a
  commutative ring spectrum.
\newblock In {\em Categorical decomposition techniques in algebraic topology
  ({I}sle of {S}kye, 2001)}, volume 215 of {\em Progr. Math.}, pages 213--236.
  Birkh\"auser, Basel, 2004.

\bibitem{Whitehead}
G.~Whitehead.
\newblock {\em Elements of homotopy theory}, volume~61.
\newblock Springer-Verlag New York, 1978.

\end{thebibliography}
\end{document}